\documentclass[12pt, twoside, leqno]{article}



\usepackage{amsmath,amsthm}
\usepackage{amssymb}

\usepackage{enumerate}

\usepackage{graphicx}



\pagestyle{myheadings}
\markboth{D. Hume}{A continuum of expanders}



\newtheorem{thm}{Theorem}[section]
\newtheorem{cor}[thm]{Corollary}

\newtheorem{prop}[thm]{Proposition}
\newtheorem{prob}[thm]{Problem}



\theoremstyle{definition}
\newtheorem{defin}[thm]{Definition}



\numberwithin{equation}{section}


\frenchspacing

\textwidth=13.5cm
\textheight=23cm
\parindent=16pt
\oddsidemargin=-0.5cm
\evensidemargin=-0.5cm
\topmargin=-0.5cm



\newcommand{\set}[1]{\left\{#1\right\}}
\newcommand{\setcon}[2]{\left\{#1\,:\,#2\right\}}
\newcommand{\abs}[1]{\left\lvert#1\right\rvert}

\newcommand{\R}{\mathbb{R}}
\newcommand{\Z}{\mathbb{Z}}
\newcommand{\N}{\mathbb{N}}

\newcommand{\sep}{\textrm{sep}}
\newcommand{\cut}{\textrm{cut}}
\newcommand{\diam}{\textrm{diam}}


\begin{document}


\baselineskip=17pt


\title{A continuum of expanders}

\author{David Hume\\
Facult\'{e} des Sciences d'Orsay\\
Universit\'{e} Paris-Sud\\
F-91405 Orsay\\ 
France\\ 
E-mail: david.hume@math.u-psud.fr
}

\date{}

\maketitle


\renewcommand{\thefootnote}{}

\footnote{2010 \emph{Mathematics Subject Classification}: Primary 20F65; Secondary 05C25.}

\footnote{\emph{Key words and phrases}: expanders, coarse embeddings, separation profile, graphical small cancellation.}

\renewcommand{\thefootnote}{\arabic{footnote}}
\setcounter{footnote}{0}


\begin{abstract}
\noindent A regular equivalence between two graphs $\Gamma,\Gamma'$ is a pair of uniformly proper Lipschitz maps $V\Gamma\to V\Gamma'$ and $V\Gamma'\to V\Gamma$.
Using separation profiles we prove that there are $2^{\aleph_0}$ regular equivalence classes of expander graphs, and of finitely generated groups with a representative which isometrically contains expanders.
\end{abstract}

\section{Introduction}
Over the past forty years the study of expanders has blossomed into a rich theory touching virtually every branch of mathematics. Many different constructions of expanders are now known, the first of which was due to Margulis \cite{Mar75, HLW-exp, Lub-exp1, Lub-exp2}. Despite the intense study, surprisingly little consideration has been given to distinguishing the large-scale geometry of families of expanders until very recently.

In this paper we will think of an $\varepsilon$-expander as an infinite union of distinct finite graphs $X=\bigcup_{n\in\N}\Gamma_n$, with Cheeger constant $h(\Gamma_n)\geq \varepsilon>0$, where each component is equipped with the shortest path metric. A $(d,\varepsilon)$-expander is an $\varepsilon$-expander where every vertex has degree at most $d$.

We distinguish expanders using regular maps. A map $X\to X'$ between graphs is \textit{regular} if it is Lipschitz and pre-images of vertices have uniformly bounded cardinality. This greatly generalises the geometric concepts of isometric, quasi-isometric and coarse embeddings between graphs with bounded geometry. 

We define two graphs $X,Y$ to be \textit{regularly equivalent} if there exist regular maps $VX\to VY$ and $VY\to VX$. To give a few examples, the graph of the Sierpi\'{n}ski triangle is regularly equivalent to a tree, $\Z$ is regularly equivalent to $\N$ while lamplighters over finite groups, $F\wr\Z$, are contained in a single regular equivalence class but countably many quasi-isometry classes \cite{EsFisWhy}.

The main result of the paper is the following.

\begin{thm}\label{thm:manyexp} There exist a family of $2^{\aleph_0}$ $(d,\varepsilon)$-expanders $\setcon{X_r}{r\in\R}$ such that given $r\neq s$ there is no regular map $X_r\to X_s$.
\end{thm}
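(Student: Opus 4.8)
The strategy is to distinguish the $X_r$ by their separation profiles. Recall that $\sep$ is monotone under regular maps: a regular map $X\to Y$ between bounded degree graphs gives $\sep_X(n)\leq C\,\sep_Y(Cn)$ for a constant $C$ depending only on the map and the degrees. Hence it suffices to produce $2^{\aleph_0}$ $(d,\varepsilon)$-expanders whose separation profiles are pairwise incomparable for the preorder $f\preceq g\iff\exists C\,\forall n\colon f(n)\leq Cg(Cn)+C$. I would build these as disjoint unions of high-girth expanders indexed by an almost disjoint family: using Ramanujan or high-girth regular graphs (and passing to a subsequence of the available sizes), fix a sequence $(\Gamma_n)_{n\in\N}$ of $d$-regular graphs with $h(\Gamma_n)\geq\varepsilon$, with sizes $m_n:=\abs{V\Gamma_n}$ strictly increasing and satisfying $m_{n+1}\geq 2^{m_n}$, and with girth at least $c_0\log_{d-1}m_n$ for a fixed $c_0>0$; fix an almost disjoint family $\setcon{A_r}{r\in\R}$ of infinite subsets of $\N$ (so $\abs{A_r\cap A_s}<\infty$ for $r\neq s$), of which there are $2^{\aleph_0}$; and put $X_r:=\bigsqcup_{n\in A_r}\Gamma_n$, which is a $(d,\varepsilon)$-expander.

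The lower bound for $\sep_{X_r}$ is easy: for $n\in A_r$ take $\Lambda=\Gamma_n$ in the definition; if $\Gamma_n\setminus S$ has all components of size $\leq m_n/2$, then a union $U$ of some of these components has $\abs U\in[m_n/4,m_n/2]$ with $\partial U$ supported on edges meeting $S$, so $\varepsilon\abs U\leq\abs{\partial U}\leq d\abs S$; hence $\sep_{X_r}(m_n)\geq\cut(\Gamma_n)\geq(\varepsilon/4d)m_n$. The upper bound rests on a key estimate: a subgraph of a component that is much smaller than that component has tiny separation. If $\Lambda\subseteq\Gamma_n$ and $k=\abs{V\Lambda}$, then every subgraph of $\Lambda$ has girth at least $c_0\log_{d-1}m_n$, whereas an $\alpha$-expander on at most $k$ vertices has diameter, hence girth, $O(\alpha^{-1}\log k)$; therefore $\Lambda$ has no $\alpha$-expander subgraph once $\alpha\gg\log k/\log m_n$, and the standard passage from bounds on the expansion of subgraphs to bounds on the separation profile yields $\cut(\Lambda)=O\!\big(k\log k/\log m_n\big)$. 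This is $o(k)$ exactly when $\log k=o(\log m_n)$, which holds whenever $\Gamma_n$ is a component vastly larger than $k$.

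Now let $r\neq s$. Since $A_r$ is infinite and $A_r\cap A_s$ is finite, $A_r\setminus A_s$ is infinite; choose $n\in A_r\setminus A_s$ as large as needed. Then $\sep_{X_r}(m_n)\geq(\varepsilon/4d)m_n$. On the other hand, because $n\notin A_s$, every component of $X_s$ has size at most $m_{n-1}\leq\log_2 m_n$ or at least $m_{n+1}\geq 2^{m_n}$. Given $C$ and a subgraph $\Lambda\subseteq X_s$ with $\abs{V\Lambda}\leq Cm_n$: at most one component meets $\Lambda$ in more than half of $V\Lambda$, a component of size $\leq m_{n-1}$ may be deleted at that cost, and if the large part of $\Lambda$ lies in a component $\Gamma_j$ with $m_j\geq 2^{m_n}$ then the key estimate cuts it at cost $O\!\big(Cm_n\log(Cm_n)/\log m_j\big)$, which is $o(m_n)$ since $\log m_j\geq m_n$. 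Hence $\sep_{X_s}(Cm_n)=o(m_n)$ for each fixed $C$, so for all sufficiently large $n$ we have $C\,\sep_{X_s}(Cm_n)+C<\sep_{X_r}(m_n)$; thus no $C$ witnesses $\sep_{X_r}\preceq\sep_{X_s}$, and there is no regular map $X_r\to X_s$. As each $\Gamma_n$ is $d$-regular with $h(\Gamma_n)\geq\varepsilon$, the family $\setcon{X_r}{r\in\R}$ consists of $(d,\varepsilon)$-expanders, completing the proof.

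The main obstacle is the key estimate: controlling $\cut(\Lambda)$ for a subgraph $\Lambda$ of a high-girth component $\Gamma_n$ in terms of $\abs{V\Lambda}$ and the girth, i.e.\ the interplay between girth, the minimal number of vertices of an expander of given quality, and the separation profile. Granting that, the rest is bookkeeping with the very fast-growing sizes $m_n$ and with the elementary behaviour of $\sep$ on disjoint unions.
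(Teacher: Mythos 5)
Your proposal is correct in outline and follows the same basic strategy as the paper: distinguish the unions $X_r=\bigsqcup_{n\in A_r}\Gamma_n$, indexed by an almost disjoint family of infinite subsets of $\N$, by evaluating separation profiles at the component sizes, with the lower bound $\sep_{X_r}(m_n)\geq c\,\varepsilon m_n$ coming from the Cheeger constant of $\Gamma_n$ and the upper bound for $\sep_{X_s}$ at the same scale coming from girth. The genuine difference is how that upper bound is obtained. The paper passes to a subsequence so sparse that $g(\Gamma_{n+1})>\abs{\Gamma_n}$; then any subgraph on at most $\abs{\Gamma_c}$ vertices whose dominant piece lies in a larger component meets that component in a forest, so its cut size is bounded by a constant and no quantitative estimate is needed. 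You keep only logarithmic girth and compensate with doubly exponential sizes, which forces you through the ``key estimate'' $\cut(\Lambda)=O(k\log k/\log m_n)$ --- precisely the step you grant rather than prove. Be aware that this is a real lemma, not bookkeeping: it needs (i) the passage from large cut size to an expanding subgraph (if $\cut(\Lambda)\geq 2\alpha k$ then $\Lambda$ contains a subgraph on more than $k/2$ vertices with Cheeger constant at least $\alpha$), which is not an off-the-shelf citation but is exactly Proposition \ref{prop:lwbdCh} of this paper, proved by iterating efficient cuts; and (ii) the diameter bound for bounded-degree $\alpha$-expanders together with $g\leq 2\,\diam+1$, where you must also dispose of the degenerate case that the extracted subgraph is a forest (harmless, since a bounded-degree forest on $N$ vertices has Cheeger constant $O(1/N)$). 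Granting (i) and (ii), your bookkeeping with $m_{n+1}\geq 2^{m_n}$ goes through, and you correctly absorb the constant from regularity by bounding $\sep_{X_s}(Cm_n)=o(m_n)$ for each fixed $C$. So your route works at the price of proving the expander-detection lemma, whereas the paper's stronger girth hypothesis (available for free, since one passes to a subsequence of an unbounded-girth expander in any case) makes the upper bound entirely elementary and renders the Theorem \ref{thm:sublinsep}-type machinery unnecessary for this particular statement.
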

In fact, we will prove that given any $(d,\varepsilon)$ expander $\bigsqcup_{n\in\N} \Gamma_n$ with \textit{unbounded girth} there exists a set $\mathcal{N}$ of $2^{\aleph_0}$ infinite subsets of $\N$ such that there is no regular map $\bigsqcup_{n\in M} \Gamma_n\to \bigsqcup_{n\in N} \Gamma_n$ whenever $M,N\in\mathcal{N}$ and $M\neq N$.

The girth of a graph $\Gamma$, denoted $g(\Gamma)$, is the length of the shortest positive length simple cycle in $\Gamma$. We say $\bigsqcup_{n\in\N} \Gamma_n$ has unbounded girth if $g(\Gamma_n)\to\infty$ as $n\to\infty$.

\medskip
Mendel and Naor recently produced the first example of two expanders which are not coarsely equivalent \cite[Theorem $9.1$]{MN14} one of bounded girth and another of unbounded girth. The question of distinguishing expanders up to coarse equivalence is also considered by Ostrovskii \cite[Theorem $5.76$]{Os13}.

Two alternative constructions of non coarsely equivalent expanders have appeared since this paper was first written. Das proves that a box space of $SL_n(\Z)$ cannot coarsely embed into a box space of $SL_m(\Z)$ whenever $n>m\geq 3$ \cite{Das}. Khukhro and Valette, meanwhile, prove that there are uncountably many coarse equivalence classes of expanders occuring as box spaces of $SL_3(\Z)$ \cite{KhVal}. This is particularly interesting in combintion with the results in this paper as their expanders do not have unbounded girth.

\medskip
Using small cancellation labellings developed by Osajda \cite{Osaj-14-expander}, certain expanders of unbounded girth can be isometrically embedded into Cayley graphs of finitely generated groups. We prove that this can be done in a way which maintains the above distinctness.

\begin{thm}\label{thm:manygps} There exists a family of finitely generated groups $\setcon{G_r}{r\in\R}$ such that given $r\neq s$ there is no regular map $G_r\to G_s$.
\end{thm}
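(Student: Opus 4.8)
The plan is to feed the refinement of Theorem~\ref{thm:manyexp} into Osajda's graphical small cancellation machinery, using the monotonicity of the separation profile under regular maps as the bridge. First I would fix once and for all a $(d,\varepsilon)$-expander $\bigsqcup_{n\in\N}\Gamma_n$ of unbounded girth carrying a $Gr'(1/6)$-labelling over a finite alphabet; Osajda's expanders are of this form, and a graphical small cancellation family over a finite alphabet necessarily has unbounded girth, so this costs nothing. Applying the refinement of Theorem~\ref{thm:manyexp} to this family produces a set $\mathcal N$ of $2^{\aleph_0}$ infinite subsets of $\N$ with no regular map $\bigsqcup_{n\in M}\Gamma_n\to\bigsqcup_{n\in N}\Gamma_n$ for distinct $M,N\in\mathcal N$; since that theorem is proved by exhibiting this obstruction at the level of separation profiles, we in fact have $\sep_{\bigsqcup_{n\in M}\Gamma_n}\not\preceq\sep_{\bigsqcup_{n\in N}\Gamma_n}$, with a large gap on a sequence of scales.

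For each $M\in\mathcal N$ the subfamily $\{\Gamma_n:n\in M\}$ still satisfies $Gr'(1/6)$ (discarding relators only relaxes the condition) and is still a $(d,\varepsilon)$-expander of unbounded girth, so Osajda's theorem supplies a finitely generated group $G_M$ whose Cayley graph isometrically contains each $\Gamma_n$, $n\in M$, the copies being spread out so that the inclusion $V\bigl(\bigsqcup_{n\in M}\Gamma_n\bigr)\to VG_M$ is a regular map. Now suppose, for distinct $M,N\in\mathcal N$, there were a regular map $G_M\to G_N$. Precomposing with the isometric embedding yields a regular map $\bigsqcup_{n\in M}\Gamma_n\to G_N$, hence $\sep_{\bigsqcup_{n\in M}\Gamma_n}\preceq\sep_{G_N}$ by monotonicity. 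In view of the first paragraph, the theorem will follow once we prove the matching upper bound $\sep_{G_N}\preceq\sep_{\bigsqcup_{n\in N}\Gamma_n}$ (up to an additive logarithmic error, which is harmless), for then $\sep_{\bigsqcup_{n\in M}\Gamma_n}\preceq\sep_{\bigsqcup_{n\in N}\Gamma_n}$, a contradiction. Indexing $\mathcal N$ by $\R$ gives the family $\{G_r:r\in\R\}$.

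The core of the argument --- and the step I expect to be the genuine obstacle --- is therefore this upper bound: that in the Cayley graph of a graphical small cancellation group there is no source of separation beyond the embedded relators themselves, so that $\sep_{G_N}(n)\lesssim\sep_{\bigsqcup_{n\in N}\Gamma_n}(n)+\log n$. I would extract this from the structure of $Gr'(1/6)$-complexes. By Strebel's classification of reduced diagrams the Cayley graph is ``tree-like away from the relators'': any geodesic polygon not supported near a single copy of some $\Gamma_m$ behaves hyperbolically, and since the girths can be taken to grow lacunarily, each metric scale sees at most one relator type. Given a finite subgraph $H\subseteq VG_N$ with $|H|\le n$, one decomposes it along this tree-like skeleton --- cutting the skeleton costs only $O(\log n)$ vertices, and each maximal part of $H$ lying near a single copy of $\Gamma_m$ can be cut using at most $\sep_{\Gamma_m}(n)\le\sep_{\bigsqcup_{n\in N}\Gamma_n}(n)$ further vertices --- and assembles the pieces. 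The delicate point is the bookkeeping: bounding how many relator copies a size-$n$ subgraph can meet substantially, and controlling the interface between relator parts and the tree-like part, all uniformly in $N$; everything else in the argument is assembling tools that are already in place.
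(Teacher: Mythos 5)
Your overall skeleton matches the paper's: fix an Osajda $C'(1/6)$-labelled $(d,\varepsilon)$-expander of unbounded girth, pass to subfamilies indexed by a continuum of subsets of $\N$, use the isometric embedding of each $\Gamma_n$ to force $\sep_{G_M}(\abs{\Gamma_n})\geq\frac{\varepsilon}{4}\abs{\Gamma_n}$ at scales $n\in M$, and use monotonicity of $\sep$ under regular maps to derive a contradiction. But the step you yourself flag as ``the genuine obstacle'' is a real gap, and the bound you propose to fill it with, $\sep_{G_N}(n)\lesssim\sep_{\bigsqcup_{n\in N}\Gamma_n}(n)+\log n$, is both unproven and far stronger than anything the available tools give. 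The sketch via Strebel's classification and a ``tree-like skeleton'' cut at cost $O(\log n)$ does not stand up: away from the relators the Cayley graph of a graphical small cancellation group is only coarsely hyperbolic, and the only general separation bound for bounded-degree hyperbolic graphs is $\sep(n)\preceq n^{\frac{k-1}{k}}$ for some $k$ depending on the group (via Bonk--Schramm, the paper's Theorem \ref{thm:hypsep}) --- nowhere near logarithmic, and with an exponent that can vary with the finite subfamily of relators present at a given scale. So even granting that a subgraph on few vertices ``sees'' only small relators, you have no control of the non-relator contribution within an additive $\log n$; the bookkeeping you defer is exactly the content of the theorem, and as stated your upper bound may simply be false.

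The paper closes this gap by weakening what is needed and trading it for a choice of subsequence. One does not need $\sep_{G_N}\preceq\sep_{\bigsqcup_{n\in N}\Gamma_n}$ at all scales; one only needs $\sep_{G_N}$ to be quantitatively sublinear at the scales $\abs{\Gamma^k}$ with $k\in M\setminus N$. This is arranged by a diagonal sparsification performed \emph{after} looking at the groups: pass to a subsequence $(\Gamma^k)$ such that (i) $g(\Gamma^{k+1})>2\abs{\Gamma^k}$, so any connected subgraph of $G(\boldsymbol\Lambda(N))$ on at most $\abs{\Gamma^k}$ vertices isometrically embeds in $G(\boldsymbol\Lambda)$ for some $\boldsymbol\Lambda\subseteq\set{\Gamma^1,\dots,\Gamma^{k-1}}$, a finite $C'(1/6)$ graphical presentation and hence a hyperbolic group; and (ii) for every $\boldsymbol\Lambda\subseteq\set{\Gamma^1,\dots,\Gamma^k}$ one has $\sep_{G(\boldsymbol\Lambda)}(n)<\frac{n}{k}$ whenever $n\geq\abs{\Gamma^{k+1}}$, which is achievable precisely because these finitely many groups are hyperbolic and so have sublinear separation by the weak bound above. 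The quantifier order (first the finitely presented pieces, then the next relator chosen large enough to outrun their profiles) substitutes for the sharp structural upper bound you were attempting. Without this device --- or an actual proof of your claimed upper bound, which would require genuinely new input about graphical small cancellation complexes rather than assembling existing tools --- your argument does not close.
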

As a result there is no coarse embedding of $G_r$ into $G_s$ whenever $r\neq s$. \textit{A priori} this greatly generalises previous results which present $2^{\aleph_0}$ quasi-isometry classes of finitely generated groups, see for instance \cite{Grig84}.

The groups we obtain are infinitely presented graphical $C'(1/6)$ small cancellation groups and are therefore acylindrically hyperbolic \cite{GruberSisto,DGO11}, and SQ-universal \cite{Gru15}. Moreover, they do not coarsely embed into any Hilbert space and do not satisfy the Baum--Connes conjecture with coefficients \cite{HigLafSka}.

Coarse embeddings into Hilbert spaces are highly sought, as Yu proved that any group admitting one satisfies the Novikov and coarse Baum-Connes conjectures; two important open questions in topology \cite{Yu00}.

\medskip
Our results are obtained by computing the \textit{separation profile} of graphs, as introduced by Benjamini-Schramm-Tim{\'a}r \cite{BenSchTim-12-separation-graphs}.

Given a finite graph $\Gamma$ with $n$ vertices, the \textit{cut size} of $\Gamma$, $\cut(\Gamma)$, is the minimal cardinality of a set of vertices $S$ such that any connected component of $\Gamma\setminus S$ has at most $n/2$ vertices.

The \textit{separation profile} $\sep_X$ of a graph $X$ evaluated at $n$ is the maximum of the cut sizes of all subgraphs of $X$ with at most $n$ vertices.

We will consider separation profiles up to the natural equivalence $f\preceq g$ if there exists a constant $C$ such that $f(n)\leq Cg(n)+C$ for all $n$.

If $X,Y$ are graphs with uniformly bounded degree and there exists a regular map $\phi:X\to Y$ then $\sep_X\preceq\sep_Y$ \cite{BenSchTim-12-separation-graphs}.

By other names this profile has a longer history, for instance, the Lipton-Tarjan theorem states that the cut size of any $n$ vertex planar graph is $O(\sqrt{n})$ \cite{Lipton-Tarjan}. More recently, Shchur proved that separation occurs naturally as an obstruction to quasi--isometrically embedding a metric space into a tree \cite{Shchur14}.

Our main tool is the result below which states that the separation profile detects the existence of an $\varepsilon$-expander as a subgraph.

\begin{thm}\label{thm:sublinsep} Let $X$ be a graph. Then $sep_X(n)/n\not\to 0$ if and only if $X$ contains an expander as a subgraph.
\end{thm}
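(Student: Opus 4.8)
\emph{Proof strategy.} The plan is to prove the two implications directly. For the ``if'' direction I would use the standard fact that expanders separate linearly: if $X$ contains an $\varepsilon$-expander $\bigsqcup_n\Gamma_n$ of degree at most $d$, then each $\Gamma_n$ is a subgraph of $X$ on $m_n$ vertices with $h(\Gamma_n)\ge\varepsilon$, and the $m_n$ are unbounded. I claim $\cut(\Gamma_n)\ge c\,m_n$ for $c=c(\varepsilon,d)>0$: if $S\subseteq V\Gamma_n$ is a set whose removal leaves only components of size $\le m_n/2$ and $\abs S<m_n/4$, then grouping these components greedily produces a union $A$ of some of them with $m_n/4<\abs A\le m_n/2$; every edge leaving $A$ meets $S$, so $d\abs S\ge\abs{\partial_E A}\ge h(\Gamma_n)\abs A>\varepsilon m_n/4$. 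Hence $\cut(\Gamma_n)\ge\min\set{1/4,\varepsilon/(4d)}\,m_n$, so (the $m_n$ being unbounded) $\sep_X(n)/n\not\to0$.

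For the ``only if'' direction I would argue the contrapositive, that if $X$ has no expander subgraph then $\sep_X(n)/n\to0$, and the heart of the argument is a \emph{peeling estimate}: if $\varepsilon>0$ is such that every subgraph of $X$ with more than $N$ vertices has Cheeger constant strictly below $\varepsilon$, then every subgraph $J$ of $X$ satisfies $\cut(J)<\varepsilon\abs{VJ}+N$. To prove this I would maintain a ``core'' $M_0=VJ$ and, while $\abs{M_j}>\max\set{N,\abs{VJ}/2}$, use the hypothesis on $J[M_j]$ to find $F_j\subseteq M_j$ with $0<\abs{F_j}\le\abs{M_j}/2$ and $\abs{\partial_E F_j}<\varepsilon\abs{F_j}$, delete the vertex boundary $\partial_V F_j$ of $F_j$ in $J[M_j]$ (which has size at most $\abs{\partial_E F_j}$), and set $M_{j+1}=M_j\setminus(F_j\cup\partial_V F_j)$. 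Deleting $\partial_V F_j$ separates off $F_j\setminus\partial_V F_j$, a disjoint union of components each of size $\le\abs{VJ}/2$, and the core strictly shrinks, so the process halts; if it halts with $\abs{M_j}\le N$ I would also delete $M_j$. The deleted vertices number $\sum_j\abs{\partial_V F_j}+N<\varepsilon\sum_j\abs{F_j}+N\le\varepsilon\abs{VJ}+N$, since the $F_j$ are pairwise disjoint, and their removal leaves every component of $J$ with at most $\abs{VJ}/2$ vertices, proving the estimate.

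To apply this I need the hypothesis to hold for every $\varepsilon>0$ with some finite $N=N(\varepsilon)$, i.e.\ a uniform bound on the size of any subgraph of $X$ with Cheeger constant $\ge\varepsilon$. If no such bound existed for some $\varepsilon$, I could pick connected subgraphs $\Gamma^{(j)}\subseteq X$ with $h(\Gamma^{(j)})\ge\varepsilon$ and $\abs{V\Gamma^{(j)}}\to\infty$; by the first paragraph $\cut(\Gamma^{(j)})\ge c\abs{V\Gamma^{(j)}}$, and since deleting a vertex set $T$ drops $\cut$ by at most $\abs T$, applying the peeling estimate with threshold $c/4$ to $\Gamma^{(j)}\setminus T$ shows it still contains a subgraph with Cheeger constant $\ge c/4$ and arbitrarily many vertices once $\abs{V\Gamma^{(j)}}$ is large compared with $\abs T$. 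Iterating, I would extract pairwise disjoint subgraphs of $X$ with a common positive Cheeger constant and sizes tending to infinity---an expander inside $X$, contradiction. So $N(\varepsilon)<\infty$ for every $\varepsilon$, the estimate gives $\sep_X(n)<\varepsilon n+N(\varepsilon)$, and letting $\varepsilon\to0$ gives $\sep_X(n)/n\to0$.

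The main obstacle I expect is the peeling estimate. The difficulty is that the sparse cuts supplied by the small-Cheeger hypothesis need not be balanced, so a naive recursive bisection would use an uncontrolled number of cuts; the point of peeling off the \emph{smaller} side and charging its edge boundary against its own vertices is precisely that the boundaries then telescope over a disjoint family and sum to at most $\varepsilon\abs{VJ}$. The secondary issue, passing from ``no expander'' to ``$N(\varepsilon)<\infty$'', is a short bootstrap on the same estimate; some care with the stopping rule and with the bounded-degree hypothesis on the expanders involved is the only real bookkeeping.
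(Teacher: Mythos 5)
Your argument is correct in substance, and its engine---iteratively removing sparse vertex boundaries and charging them against the pieces they split off---is the same one the paper uses; the difference is organisational. The paper proves one direct quantitative lemma (Proposition \ref{prop:lwbdCh}): a maximal sequence of $\frac{3c_\Gamma}{2}$-efficient cuts on any finite graph $\Gamma$ terminates in a subgraph on more than half the vertices with Cheeger constant at least $\cut(\Gamma)/(2\abs{\Gamma})$, and applying this to subgraphs realising $\sep_X(n)\geq 2\varepsilon n$ along a subsequence immediately yields the expander (Proposition \ref{prop:lwbdsep}). This makes your uniform threshold $N(\varepsilon)$ and the bootstrap extracting pairwise disjoint pieces unnecessary: the paper does not insist that the expander's members be disjoint inside $X$. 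Your contrapositive packaging (``sparse everywhere above size $N$ implies $\cut(J)<\varepsilon\abs{VJ}+N$'') is a valid dual formulation of the same peeling, and your disjointness bootstrap is correct and even proves a little more, at the cost of extra bookkeeping. Two slips should be repaired, both by using the paper's vertex Cheeger constant directly. First, in the ``if'' direction you introduce a degree bound $d$ that the statement does not supply (an $\varepsilon$-expander here need not be a $(d,\varepsilon)$-expander), and in the bootstrap you invoke the same linear lower bound on $\cut(\Gamma^{(j)})$ for arbitrary subgraphs of $X$, where no degree bound exists; but $d$ is not needed: since $A$ is a union of components of $\Gamma_n\setminus S$, its vertex boundary satisfies $\partial A\subseteq S$, so $\abs{S}\geq h(\Gamma_n)\abs{A}>\varepsilon m_n/4$, which is exactly Proposition \ref{prop:upbdCh} and is degree-free. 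Second, in the peeling you extract $F_j$ with $\abs{\partial_E F_j}<\varepsilon\abs{F_j}$; a small vertex Cheeger constant gives a small vertex boundary, not a small edge boundary, but the vertex bound $\abs{\partial F_j}<\varepsilon\abs{F_j}$ is all you actually use, so take it straight from the definition.
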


As a first step we obtain a control on finite subgraphs which should be of independent interest.

\begin{thm}\label{thm:Chconstsubgraphs} Let $X$ be a graph. The Cheeger constant of any $n$-vertex subgraph of $X$ is at most $4\sep_X(n)/n$.
\end{thm}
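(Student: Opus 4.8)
The plan is, given an arbitrary $n$-vertex subgraph $\Gamma$ of $X$, to exhibit a single set of vertices $A$ with $\abs{\partial A}$ small and $\abs{A}$ comparable to $n$, which then bounds the Cheeger constant $h(\Gamma)$ from above (here $\partial A$ denotes the set of vertices of $\Gamma$ lying outside $A$ but adjacent to it). Write $s=\sep_X(n)$; since $\Gamma$ has $n$ vertices we have $\cut(\Gamma)\le s$, and also the trivial bound $\cut(\Gamma)\le\lceil n/2\rceil$ holds because deleting any $\lceil n/2\rceil$ vertices leaves at most $n/2$ of them. I would fix a set $S\subseteq V\Gamma$ with $\abs{S}=\cut(\Gamma)\le\min\set{s,\lceil n/2\rceil}$ such that every connected component of $\Gamma\setminus S$ has at most $n/2$ vertices.

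Next I would build $A$ out of the connected components $C_1,\dots,C_k$ of $\Gamma\setminus S$, aiming for $n/4\le\abs{A}\le n/2$. Here $\abs{C_i}\le n/2$ for every $i$, while $\sum_i\abs{C_i}=n-\abs{S}\ge n-\lceil n/2\rceil\ge n/4$ (taking $n\ge 2$; the statement is vacuous for $n\le 1$). If some $\abs{C_i}\ge n/4$ then $A=C_i$ already works. Otherwise all $\abs{C_i}<n/4$, and adding components one by one produces a union $A$ of components whose cardinality first reaches $n/4$; since the last component added contributes less than $n/4$, this $A$ has $n/4\le\abs{A}<n/2$. In either case $A$ is a union of connected components of $\Gamma\setminus S$, so every vertex of $\Gamma$ outside $A$ that is adjacent to $A$ must lie in $S$, i.e.\ $\partial A\subseteq S$.

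Combining these, $h(\Gamma)\le\abs{\partial A}/\abs{A}\le\abs{S}/(n/4)\le 4s/n$, which is the claim. The only step that needs care is the middle one: after removing the separating set $S$ one must still have enough vertices to assemble a set of size at least $n/4$, and this is exactly where the elementary inequality $\cut(\Gamma)\le\lceil n/2\rceil$ is used. The constant $4$ is then forced by the target window $[n/4,n/2]$, which in turn comes from the fact that each component of $\Gamma\setminus S$ has at most $n/2$ vertices.
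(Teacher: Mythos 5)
Your proof is correct and is essentially the paper's own argument: the paper proves (as Proposition \ref{prop:upbdCh}) that $\cut(\Gamma)\geq \frac{h(\Gamma)}{4}\abs{\Gamma}$ by taking a cut set of size at most $\lceil n/2\rceil$, assembling components of the complement into a set $D$ with $n/4\leq\abs{D}\leq n/2$, and using $\partial D\subseteq C$, exactly as you do, and then deduces the theorem from $\cut(\Gamma)\leq\sep_X(n)$. The only difference is packaging (the paper isolates the finite-graph inequality as a separate proposition), and your explicit check that the components left after removing $S$ total at least $n/4$ vertices is a point the paper leaves implicit.
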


Our results also give the first continuum of different separation profiles: the only previously known profiles were: bounded, $\log(n)$, polynomial - $n^{1-1/d}$ for each $d\in\set{2,3,\dots}$ and $n/\log(n)$ \cite{BenSchTim-12-separation-graphs}.

We now adjust our viewpoint and prove a uniform upper bound on separation for graphs with finite Assouad-Nagata dimension. Examples include Cayley graphs of many well-studied classes of groups:  polycyclic groups, lamplighter groups, virtually special groups, mapping class groups and certain relatively hyperbolic groups (see \cite{Hume-MCGTrees} and references therein).

\begin{thm}\label{thm:finAN} Let $X$ be a graph with bounded valency and finite Assouad-Nagata dimension. Then $\sep_X(n)\preceq n/\log(n)$. If $X$ is vertex transitive and has growth at most $Cn^d$, then $\sep_X(n)\preceq n^{(d-1)/d}$.
\end{thm}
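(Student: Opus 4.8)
The plan is to establish both bounds simultaneously by one covering argument, where only the final optimisation of a scale parameter differs between the two cases. I would first recall a standard reformulation of finite Assouad--Nagata dimension: there are a constant $L$ and an integer $m$ such that for every $r\geq 1$ the vertex set of $X$ admits a cover $\mathcal{U}=\mathcal{U}_0\sqcup\dots\sqcup\mathcal{U}_m$ in which each member has diameter at most $Lr$ and each colour class $\mathcal{U}_i$ is $r$-separated, i.e.\ any two distinct members of $\mathcal{U}_i$ lie at distance greater than $r$ in $X$. Now fix a subgraph $\Gamma$ of $X$ with vertex set $W$, $\abs{W}=n$, which we may assume connected, and apply the cover at a scale $r=r_n$ to be specified at the end.

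Next I would construct a separating set $S\subseteq W$ by a pigeonhole (``random shift'') argument, one colour at a time. For fixed $i$, the sets $N_{r/2}(U):=\setcon{v\in W}{d_X(v,U)\leq r/2}$ with $U\in\mathcal{U}_i$ are pairwise disjoint because $\mathcal{U}_i$ is $r$-separated, so they contain at most $n$ vertices of $W$ altogether. Partitioning each of them into the spheres $\setcon{v\in W}{d_X(v,U)=t}$, $t=1,\dots,\lfloor r/2\rfloor$, and averaging, there is a single radius $t_i$ for which the union $S_i$ of the corresponding spheres over all $U\in\mathcal{U}_i$ satisfies $\abs{S_i}\leq n/\lfloor r/2\rfloor$. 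Setting $S=S_0\cup\dots\cup S_m$ we get $\abs{S}\leq (m+1)n/\lfloor r/2\rfloor$, which is $O(n/r)$.

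The key geometric step is that every connected component $C$ of $\Gamma\setminus S$ has $X$-diameter at most $(L+1)r$. Indeed, pick $v\in C$ and a member $U\in\mathcal{U}_i$ with $v\in U$. If some $w\in C$ satisfied $d_X(w,U)\geq t_i$, then along a path in $C$ from $v$ to $w$ the quantity $d_X(\cdot,U)$ starts at $0$ and changes by at most $1$ at each step, so it equals $t_i$ at some vertex $u$ of the path; since $t_i\leq r/2$ and $\mathcal{U}_i$ is $r$-separated, $U$ is the unique member of $\mathcal{U}_i$ within distance $t_i$ of $u$, hence $u\in S_i\subseteq S$, contradicting $u\in C$. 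Thus $C\subseteq N_{r/2}(U)$, a set of diameter at most $Lr+2(r/2)=(L+1)r$.

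It remains to choose $r_n$. In the bounded-valency case a set of $X$-diameter at most $(L+1)r$ contains at most $\beta^{r}$ vertices for some $\beta=\beta(d,L)>1$, so taking $r_n$ of order $\log n$ makes every component of $\Gamma\setminus S$ have at most $n/2$ vertices while $\abs{S}=O(n/\log n)$; as this bounds the cut size of every subgraph with at most $n$ vertices (using that $x\mapsto x/\log x$ is eventually increasing), we obtain $\sep_X(n)\preceq n/\log n$. In the vertex-transitive case with $\abs{B(v,R)}\leq CR^d$ such a set contains at most $C'r^d$ vertices, so $r_n$ of order $n^{1/d}$ suffices and gives $\abs{S}=O(n^{(d-1)/d})$. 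I expect the main obstacle to be the separating-set construction rather than the optimisation: one has to use the $r$-separation of each colour class precisely enough to rule out a component passing from one member of $\mathcal{U}_i$ to another without meeting $S$, and one must simultaneously keep $r$ large enough for $\abs{S}$ to be small and small enough for the components to stay below $n/2$ vertices --- the growth hypothesis being exactly what makes these two constraints compatible.
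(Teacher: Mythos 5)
Your proof is correct, and it takes a genuinely different (and somewhat cleaner) route than the paper's. The paper deduces Theorem \ref{thm:finAN} from a general intermediate result, Theorem \ref{thm:asymdim}: using the same cover at scale $r$, it first invokes the growth function to ensure each piece meets the given $n$-vertex subgraph $\Gamma$ in at most $n/2m$ vertices, greedily aggregates pieces of one well-chosen colour into a union $U$ with between $n/4m$ and $n/2m$ vertices, pigeonholes over the sphere-levels $1,\dots,r$ around $U$ to find one level of size at most $n/r$, removes it, and iterates $O(m)$ times until every component has at most $n/2$ vertices, giving $\sep_X(n)\leq kn/f_h(n/2m)$, which is then specialised. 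You instead make a single one-shot cut: one pigeonholed sphere-level $t_i$ around \emph{every} piece of \emph{every} colour simultaneously, the $r$-separation of each colour class making the relevant neighbourhoods disjoint so that $\abs{S}=O((m+1)n/r)$, and the discrete intermediate-value argument along paths shows each component of $\Gamma\setminus S$ stays in the $r/2$-neighbourhood of a single piece, hence has $X$-diameter $O(r)$. This avoids the iteration, yields a strictly stronger conclusion (components of bounded diameter, not merely at most $n/2$ vertices), and would in fact reprove Theorem \ref{thm:asymdim} in general, with $h(r)$ in place of $Lr$; the growth hypothesis enters only at the very end, to convert the diameter bound into a cardinality bound, whereas in the paper it also enters earlier to make the greedy aggregation work. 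Note too that you never use vertex transitivity in the polynomial-growth case, only the growth bound together with finite Assouad--Nagata dimension, which is legitimate under the theorem's stated hypotheses; the paper uses transitivity (via Sabidussi and Trofimov) only to pass from vertex-transitive graphs to Cayley graphs. Your bookkeeping points --- $\lfloor r/2\rfloor\geq 1$ for large $n$, and the passage from subgraphs with exactly $n$ vertices to those with at most $n$ via eventual monotonicity of $x/\log x$ (and of $x^{(d-1)/d}$) --- are handled correctly.
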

This is known to be sharp, since the separation of $\Z^d$ is $n^{(d-1)/d}$ while a direct product of two non-abelian free groups has separation $n/\log(n)$ \cite{BenSchTim-12-separation-graphs}.

From Theorems \ref{thm:Chconstsubgraphs} and \ref{thm:finAN} we therefore deduce that for Cayley graphs of groups with finite Assouad--Nagata dimension any $n$-vertex subgraph has Cheeger constant at most $C/\log(n)$.

\subsection*{Acknowledgements}
The author would like to thank Itai Benjamini for introducing him to the topic and for enlightening conversations, John Mackay for pushing the project in a different direction and Dominik Gruber for informative conversations about groups containing expanders. The author would also like to thank Goulnara Arzhantseva, Romain Tessera, Alain Valette and an anonymous referee for comments on earlier versions of this paper.

\section{Separation and inner expansion}\label{Sect:Exp}

\subsection{Expanders and sublinear separation}

In this section we prove Theorem \ref{thm:sublinsep}. We start by considering finite graphs.

For completeness we recall the definition of the Cheeger constant.

\begin{defin} Let $\Gamma$ be a graph with $\abs{\Gamma}=n$. The vertex-boundary of a subset $A\subseteq V(\Gamma)$ - denoted $\partial A$ - is the set of all vertices in $V(\Gamma)\setminus A$ which are neighbours of some vertex of $A$. The (vertex) \textit{Cheeger constant} of $\Gamma$ is given by
$h(\Gamma)=\min\setcon{\abs{\partial A}/\abs{A}}{\abs{A}\leq n/2}$.
\end{defin}

Note that any graph with at most $n$ vertices admits a cut set of size $\lceil \frac{n}{2}\rceil$.

\begin{prop}\label{prop:upbdCh} Let $\Gamma$ be a graph with $\abs{\Gamma}\geq 2$. Then $\cut(\Gamma)\geq \frac{h(\Gamma)}{4}\abs{\Gamma}$.
\end{prop}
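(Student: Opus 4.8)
The plan is to choose an optimal cut set $S$ for $\Gamma$, so that $\abs{S}=\cut(\Gamma)$ and every connected component of $\Gamma\setminus S$ has at most $n/2$ vertices, where $n=\abs{\Gamma}$, and then to assemble from these components a set $A$ of vertices with $\abs{A}\le n/2$ whose vertex-boundary is contained in $S$; feeding $A$ into the definition of $h(\Gamma)$ bounds $\cut(\Gamma)=\abs{S}$ from below. It is convenient first to record the crude estimate $h(\Gamma)\le 2$: since $n\ge 2$ one may pick $A_0\subseteq V(\Gamma)$ with $\abs{A_0}=\lfloor n/2\rfloor\ge 1$, and as $\partial A_0\subseteq V(\Gamma)\setminus A_0$ this gives $h(\Gamma)\le\abs{\partial A_0}/\abs{A_0}\le\lceil n/2\rceil/\lfloor n/2\rfloor\le 2$. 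Writing $m=n-\cut(\Gamma)=\abs{V(\Gamma)\setminus S}$, this disposes of the degenerate regime $m\le n/4$, since then $\cut(\Gamma)=n-m\ge\tfrac{3}{4} n\ge\tfrac{h(\Gamma)}{4} n$.

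So assume $m> n/4$. I claim there is a union $A$ of connected components of $\Gamma\setminus S$ (viewed as vertex sets) with $n/4<\abs{A}\le n/2$. No component has more than $n/2$ vertices; if some component has more than $n/4$ vertices, take $A$ to be that component. Otherwise every component has at most $n/4$ vertices: enumerating the components as $C_1,\dots,C_k$ and setting $A_j=C_1\cup\cdots\cup C_j$, the total $\abs{A_k}=m$ exceeds $n/4$, so there is a least $j$ with $\abs{A_j}>n/4$; then $\abs{A_{j-1}}\le n/4$ and $\abs{A_j}=\abs{A_{j-1}}+\abs{C_j}\le n/4+n/4=n/2$, and we set $A=A_j$. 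Now if a vertex $v\in V(\Gamma)\setminus A$ adjacent to $A$ had $v\notin S$, then $v$ would lie in the same connected component of $\Gamma\setminus S$ as one of its neighbours in $A$, forcing $v\in A$, a contradiction; hence $\partial A\subseteq S$. Since $\abs{A}\le n/2$, the definition of the Cheeger constant gives $h(\Gamma)\le\abs{\partial A}/\abs{A}\le\abs{S}/\abs{A}$, whence $\cut(\Gamma)=\abs{S}\ge h(\Gamma)\abs{A}>\tfrac{h(\Gamma)}{4} n$, as required.

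I expect the one slightly delicate point to be this greedy selection of $A$: the two-sided bound $n/4<\abs{A}\le n/2$ is exactly what allows $\partial A\subseteq S$ and the Cheeger inequality to be used simultaneously, and producing it relies essentially on the defining property of a cut set (no component larger than $n/2$) together with the split according to whether or not some single component already exceeds $n/4$. Everything else is immediate from the definitions.
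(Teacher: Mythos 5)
Your proof is correct and follows essentially the same route as the paper: take a minimal cut set, greedily assemble a union $A$ of components of the complement with $n/4 < \abs{A} \le n/2$, observe $\partial A$ lies in the cut set, and apply the definition of $h(\Gamma)$. The only cosmetic difference is that you dispose of the case where the complement of the cut set is small via the crude bound $h(\Gamma)\le 2$, whereas the paper excludes that case by noting a cut set of size $\lceil n/2\rceil$ always exists, so the minimal one leaves at least $\lfloor n/2\rfloor \ge n/4$ vertices.
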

\begin{proof}
Set $\abs{\Gamma}=n$. Let $C$ be any cut set of $\Gamma$ with $\abs{C}\leq \lceil \frac{n}{2}\rceil$, so any connected component of $\Gamma\setminus C$ contains at most $\lfloor \frac{n}{2} \rfloor$ vertices. Define $D$ to be a union of connected components of $\Gamma\setminus C$ containing between $n/4$ and $n/2$ vertices: if no single component exists satisfying these bounds then every component contains at most $\lfloor \frac{n}{4}\rfloor$ vertices and we obtain $D$ by adding components in any order until the desired inequalities are achieved. As $\partial D\subseteq C$ we see that $\abs{C}\geq h(\Gamma)\frac{n}{4}$. Hence, $\cut(\Gamma)\geq n\frac{h(\Gamma)}{4}$.
\end{proof}
This immediately implies Theorem \ref{thm:Chconstsubgraphs}.

For the other bound, we will require a more sensitive type of cut. We use the notation $\Gamma \to_{C} \Gamma'$ to denote that $C$ is a subset of vertices of $\Gamma$ and $\Gamma'$ is some largest connected component of $\Gamma\setminus C$.

\begin{defin} Let $\Gamma$ be a graph with $\abs{\Gamma}\geq 2$. We say $\Gamma \to_{C} \Gamma'$ is a $k$-\textit{efficient cut} if $\abs{\Gamma}-\abs{\Gamma'} > k\abs{C}$.
\end{defin}

From the definition of a cut set it is clear that $\abs{\Gamma}>\abs{\Gamma'}>\frac{\abs{\Gamma}}{2}$ whenever $k\geq c_\Gamma:=\frac{\abs{\Gamma}}{\cut(\Gamma)}$ so there is a unique largest component in this case. As we are working with finite graphs every sequence of $k$-efficient cuts terminates.

\begin{prop}\label{prop:lwbdCh} Suppose $\Gamma$ has $n\geq 2$ vertices and let $\Gamma \to_{C_1} \dots \to_{C_m} \Gamma_m$ be any maximal sequence of $\frac{3c_\Gamma}{2}$-efficient cuts. Then $\abs{\Gamma_m}> \frac{n}{2}$ and $h(\Gamma_m)\geq \frac{\cut(\Gamma)}{2n}$.
\end{prop}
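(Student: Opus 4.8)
The plan is to repackage the whole sequence of cuts as a single cut of $\Gamma$. Set $D_i=C_1\cup\dots\cup C_i$. Since each $C_{i+1}$ lies in $V(\Gamma_i)$, and $V(\Gamma_i)$ is disjoint from $C_1,\dots,C_i$, the sets $C_j$ are pairwise disjoint, $|D_i|=\sum_{j\le i}|C_j|$, and telescoping the efficiency inequalities gives $n-|\Gamma_i|=\sum_{j\le i}(|\Gamma_{j-1}|-|\Gamma_j|)>\tfrac{3c_\Gamma}{2}|D_i|$, so $|D_i|<\tfrac{2}{3c_\Gamma}(n-|\Gamma_i|)$, which is less than $\tfrac23\cut(\Gamma)$ in general and less than $\tfrac13\cut(\Gamma)$ as soon as $|\Gamma_i|>n/2$. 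The heart of the argument is the claim, proved by induction on $i$, that $\Gamma_i$ is a \emph{connected component} of $\Gamma\setminus D_i$ with $|\Gamma_i|>n/2$. The base case is immediate: $|D_1|<\tfrac23\cut(\Gamma)<\cut(\Gamma)$, so $C_1$ is not a cut set of $\Gamma$ and its largest complementary component $\Gamma_1$ has more than $n/2$ vertices. For the inductive step, the role of the word ``component'' is that the vertices of $\Gamma_i$ have no neighbours in $\Gamma\setminus(V(\Gamma_i)\cup D_i)$, so deleting $C_{i+1}\subseteq V(\Gamma_i)$ does not affect the rest of $\Gamma\setminus D_i$; hence $\Gamma_{i+1}$ is again a component of $\Gamma\setminus D_{i+1}$, and the components of $\Gamma\setminus D_{i+1}$ are exactly the pieces of $\Gamma_i\setminus C_{i+1}$ together with the components of $\Gamma\setminus D_i$ distinct from $\Gamma_i$. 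As $|D_{i+1}|<\tfrac23\cut(\Gamma)<\cut(\Gamma)$, some component of $\Gamma\setminus D_{i+1}$ has more than $n/2$ vertices; it cannot be among the components of $\Gamma\setminus D_i$ distinct from $\Gamma_i$ (which span fewer than $n/2$ vertices in total, by the inductive hypothesis), so it is a piece of $\Gamma_i\setminus C_{i+1}$, whence $|\Gamma_{i+1}|$, the size of the largest such piece, exceeds $n/2$. Taking $i=m$ proves the first assertion of the proposition.

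Next I would promote cut sets of $\Gamma_m$ to cut sets of $\Gamma$. Since $|\Gamma_m|>n/2$ we now also have $|D_m|<\tfrac13\cut(\Gamma)$. Because $\Gamma_m$ is a component of $\Gamma\setminus D_m$ with more than $n/2$ vertices, for any cut set $S$ of $\Gamma_m$ the set $D_m\cup S$ is a cut set of $\Gamma$: the components of $\Gamma\setminus(D_m\cup S)$ are either components of $\Gamma\setminus D_m$ other than $\Gamma_m$, of size less than $n/2$, or components of $\Gamma_m\setminus S$, of size at most $|\Gamma_m|/2\le n/2$. As $S$ and $D_m$ are disjoint, this yields $\cut(\Gamma_m)\ge\cut(\Gamma)-|D_m|>\tfrac23\cut(\Gamma)$.

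The Cheeger bound then follows from maximality of the sequence, which says that $|\Gamma_m|-|\Gamma'|\le\tfrac{3c_\Gamma}{2}|C|$ for every $C\subseteq V(\Gamma_m)$ with $\Gamma_m\to_C\Gamma'$. Fix $A\subseteq V(\Gamma_m)$ with $0<|A|\le|\Gamma_m|/2$ and apply this with $C=\partial A$. Writing $B=V(\Gamma_m)\setminus(A\cup\partial A)$, there are no edges between $A$ and $B$, so each component of $\Gamma_m\setminus\partial A$ lies in $A$ or in $B$. If the largest one lies in $B$, then $|\Gamma_m|-|\Gamma'|\ge|\Gamma_m|-|B|=|A|+|\partial A|$, so maximality gives $|A|\le\tfrac{3c_\Gamma}{2}|\partial A|$, hence $|\partial A|\ge\tfrac{2\cut(\Gamma)}{3n}|A|\ge\tfrac{\cut(\Gamma)}{2n}|A|$. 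If instead the largest one lies in $A$, then every component of $\Gamma_m\setminus\partial A$ has at most $|A|\le|\Gamma_m|/2$ vertices, so $\partial A$ is a cut set of $\Gamma_m$ and $|\partial A|\ge\cut(\Gamma_m)>\tfrac23\cut(\Gamma)\ge\tfrac{\cut(\Gamma)}{4}\ge\tfrac{\cut(\Gamma)}{2n}|A|$, the last step using $|A|\le n/2$. In both cases $|\partial A|/|A|\ge\cut(\Gamma)/(2n)$, whence $h(\Gamma_m)\ge\cut(\Gamma)/(2n)$.

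The main obstacle is the inductive claim, and specifically finding the right statement to induct on. One must track that $\Gamma_i$ is a \emph{full} component of $\Gamma\setminus D_i$ rather than merely a connected subgraph, since this is exactly what makes each later cut $C_{i+1}$ local and what licenses the promotion of cut sets of $\Gamma_m$ to cut sets of $\Gamma$ in the last two steps; and one must prove this jointly with $|\Gamma_i|>n/2$, because the estimate $|D_m|<\tfrac13\cut(\Gamma)$ driving those steps is only available once $|\Gamma_i|>n/2$ is known. The constant $\tfrac32$ is chosen precisely so that both the unconditional bound $|D_i|<\tfrac23\cut(\Gamma)$ (used to keep cutting down to a component of size larger than $n/2$) and the conditional bound $|D_i|<\tfrac13\cut(\Gamma)$ (used for the Cheeger estimate) hold with room to spare.
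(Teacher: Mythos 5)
Your proof is correct and follows essentially the same route as the paper: telescope the efficiency inequalities to show that $C_1\cup\dots\cup C_m$ is too small to be a cut set of $\Gamma$ (forcing $\abs{\Gamma_m}>n/2$), then apply maximality of the sequence to the cut of $\Gamma_m$ along $\partial A$ to get the Cheeger bound. Your additional bookkeeping --- the induction showing each $\Gamma_i$ is a full component of $\Gamma\setminus(C_1\cup\dots\cup C_i)$, the promotion bound $\cut(\Gamma_m)>\tfrac{2}{3}\cut(\Gamma)$, and the case where the largest component of $\Gamma_m\setminus\partial A$ lies inside $A$ --- merely fills in details that the paper's one-line inequality $\abs{A}+\abs{\partial A}\leq\tfrac{3}{2}c_\Gamma\abs{\partial A}$ leaves implicit.
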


\begin{proof}
Suppose $\abs{\Gamma_m}\leq \frac{n}{2}$. Then $\bigcup C_i$ is a cut set for $\Gamma$ containing at most $\frac{2}{3}\cut(\Gamma)$ points, which is a contradiction.

Now let $A\subset \Gamma_m$ with $\abs{A}\leq \frac{1}{2}\abs{\Gamma_m}$. As $\Gamma_m$ admits no $\frac{3c_ \Gamma}{2}$-efficient cuts, we know that $\abs{A}+\abs{\partial A} \leq \frac{3}{2}c_\Gamma\abs{\partial A}$.

Rearranging this, we see that $\abs{\partial A} \geq \frac{\abs{A}}{2c_{\Gamma}}$, so $\frac{\abs{\partial A}}{\abs{A}} \geq \frac{\cut(\Gamma)}{2n}$.

This holds for all $A$ with $\abs{A}\leq \frac{1}{2}\abs{\Gamma_m}$, so we see that $h(\Gamma_m)\geq \frac{\cut(\Gamma)}{2n}$.
\end{proof}

We now prove Theorem \ref{thm:sublinsep} via the following two propositions.

\begin{prop}\label{prop:upbdsep}  Let $X$ be an infinite graph which contains some $\varepsilon$-expander $\bigcup \Gamma_n$. For every $n$, $\sep_X(\abs{\Gamma_n})\geq \frac{\varepsilon}{4} \abs{\Gamma_n}$.

In particular, $\frac{1}{n}\sep_X(n)\not\to 0$.
\end{prop}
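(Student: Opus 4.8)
The plan is to apply Proposition \ref{prop:upbdCh} to each component $\Gamma_n$ of the expander and then relate $\cut(\Gamma_n)$ to the separation profile of the ambient graph $X$. Fix $n$ and set $N = \abs{\Gamma_n}$. Since $\Gamma_n$ is a subgraph of $X$ with $N$ vertices, the definition of $\sep_X$ gives $\sep_X(N) \geq \cut(\Gamma_n)$. On the other hand, Proposition \ref{prop:upbdCh} applied to $\Gamma_n$ (which has at least $2$ vertices, as it is a component of an $\varepsilon$-expander) yields $\cut(\Gamma_n) \geq \frac{h(\Gamma_n)}{4}\abs{\Gamma_n} \geq \frac{\varepsilon}{4}N$, using the defining property $h(\Gamma_n) \geq \varepsilon$ of an $\varepsilon$-expander. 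Chaining these two inequalities gives $\sep_X(\abs{\Gamma_n}) \geq \frac{\varepsilon}{4}\abs{\Gamma_n}$, which is the first assertion.

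For the second assertion I would argue that the sizes $\abs{\Gamma_n}$ form an unbounded sequence of integers: the $\Gamma_n$ are, by definition of an $\varepsilon$-expander in this paper, an infinite union of \emph{distinct} finite graphs, so there are infinitely many distinct values among the $\abs{\Gamma_n}$ (only finitely many graphs of each fixed size can appear, since we only care about them up to isomorphism — or more simply, a graph with $k$ vertices and bounded structure cannot repeat infinitely often with distinct copies in a reasonable enumeration). Hence along the subsequence $m = \abs{\Gamma_n} \to \infty$ we have $\frac{1}{m}\sep_X(m) \geq \frac{\varepsilon}{4} > 0$, so $\frac{1}{n}\sep_X(n) \not\to 0$.

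The only genuinely delicate point is the claim that $\abs{\Gamma_n} \to \infty$, or at least is unbounded. If one insists merely that the $\Gamma_n$ are distinct, then since there are only finitely many isomorphism types of graph on any fixed vertex set of size $k$, the sequence $(\abs{\Gamma_n})$ cannot be bounded, so it has an unbounded subsequence; this suffices for the "in particular" statement. I expect the paper either to have this built into its standing conventions on expanders or to note it in one line. Everything else is a direct substitution into Proposition \ref{prop:upbdCh} together with the trivial monotonicity/definition of $\sep_X$, so this is the routine half of Theorem \ref{thm:sublinsep}; the substantive content lies in the converse (the companion proposition producing an expander subgraph when $\sep_X(n)/n \not\to 0$).
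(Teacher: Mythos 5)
Your proof is correct and is essentially the paper's argument: the paper simply states that the proposition "follows immediately from Proposition \ref{prop:upbdCh}", and your chain $\sep_X(\abs{\Gamma_n})\geq\cut(\Gamma_n)\geq\frac{h(\Gamma_n)}{4}\abs{\Gamma_n}\geq\frac{\varepsilon}{4}\abs{\Gamma_n}$ is exactly that deduction spelled out. Your side remark about the sizes $\abs{\Gamma_n}$ being unbounded correctly identifies the (implicit) convention in the paper's definition of an expander as a union of distinct finite graphs, and it is handled adequately.
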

\begin{proof}
This follows immediately from Proposition \ref{prop:upbdCh}.
\end{proof}

\begin{prop}\label{prop:lwbdsep}  Let $X$ be an infinite graph and suppose $\sep_X(n)/n\not\to 0$. Then there exists some $\varepsilon>0$ such that $X$ contains an $\varepsilon$-expander $\bigcup \Gamma_n$.
\end{prop}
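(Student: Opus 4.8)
The plan is to convert the hypothesis $\sep_X(n)/n\not\to 0$ into a single sequence of finite subgraphs carrying a quantitative cut bound, and then extract an honest expander from these by repeated application of Proposition \ref{prop:lwbdCh}, with the one subtlety being that the extracted pieces must be made pairwise disjoint. First I would unwind the hypothesis: there is $\delta>0$ (with $\delta\le 1$, since cut sizes never exceed the number of vertices) and integers $n_1<n_2<\cdots$ with $\sep_X(n_k)\geq\delta n_k$; by the definition of $\sep_X$ this gives subgraphs $Y_k\subseteq X$ with $\abs{Y_k}\leq n_k$ and $\cut(Y_k)\geq\delta n_k\geq\delta\abs{Y_k}$. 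In particular $\abs{Y_k}\to\infty$ and $c_{Y_k}=\abs{Y_k}/\cut(Y_k)\le 1/\delta$.

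The second ingredient is the elementary inequality $\cut(\Gamma)\le\cut(\Gamma\setminus S)+\abs{S}$, valid for any finite graph $\Gamma$ and $S\subseteq V\Gamma$: an optimal cut set of $\Gamma\setminus S$, together with $S$, is a cut set of $\Gamma$ (every component of $\Gamma\setminus S$ minus that cut set has at most $\abs{\Gamma\setminus S}/2\le\abs{\Gamma}/2$ vertices). Thus deleting a bounded number of vertices from a $Y_k$ barely decreases its cut size, provided $\abs{Y_k}$ is large compared with the number of deleted vertices.

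Now I would construct $\bigsqcup_j\Gamma_j$ recursively. Suppose pairwise vertex-disjoint connected subgraphs $\Gamma_1,\dots,\Gamma_j\subseteq X$ with $\abs{\Gamma_1}<\cdots<\abs{\Gamma_j}$ and $h(\Gamma_i)\ge\delta/4$ have been chosen, and set $V=\bigcup_{i\le j}V\Gamma_i$. Choose $k$ large enough that $\abs{Y_k}\ge 2\abs{V}/\delta$ and $\tfrac14\abs{Y_k}>\abs{\Gamma_j}$, and put $Z=Y_k\setminus V$. Then $\abs{Z}\ge\abs{Y_k}-\abs{V}\ge\tfrac12\abs{Y_k}$ and, by the inequality above, $\cut(Z)\ge\cut(Y_k)-\abs{V}\ge\delta\abs{Y_k}-\abs{V}\ge\tfrac{\delta}{2}\abs{Y_k}\ge\tfrac{\delta}{2}\abs{Z}$, so $c_Z\le 2/\delta$. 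Applying Proposition \ref{prop:lwbdCh} to $Z$ (with the parameter $\tfrac{3c_Z}{2}$) yields a connected subgraph $\Gamma_{j+1}\subseteq Z\subseteq X$ with $\abs{\Gamma_{j+1}}>\abs{Z}/2\ge\tfrac14\abs{Y_k}$ and $h(\Gamma_{j+1})\ge\cut(Z)/(2\abs{Z})\ge\delta/4$. Since $\Gamma_{j+1}\subseteq X\setminus V$ it is disjoint from $\Gamma_1,\dots,\Gamma_j$, and by the choice of $k$ it has strictly more vertices than $\Gamma_j$, so the $\Gamma_i$ are pairwise non-isomorphic. The resulting $\bigsqcup_j\Gamma_j$ is a subgraph of $X$ that is a $\tfrac{\delta}{4}$-expander, so $\varepsilon=\delta/4$ works.

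The purely routine parts are the bookkeeping with constants and checking that $Z$ has enough vertices for Proposition \ref{prop:lwbdCh} and Proposition \ref{prop:upbdCh} to apply. The one genuinely delicate point is the disjointness of the pieces: a priori the finite subgraphs witnessing $\sep_X(n_k)\ge\delta n_k$ could overlap heavily — for instance $X$ could be an infinite ``book'' of expanders glued along a common finite vertex set — so one cannot simply take the $Y_k$ themselves as the components. Deleting the previously used vertices before re-applying Proposition \ref{prop:lwbdCh}, combined with the cut-deletion inequality that ensures the truncated graph still has a large cut, is what resolves this.
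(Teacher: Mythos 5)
Your proof is correct, and its core is the same as the paper's: from the hypothesis you extract finite subgraphs $Y_k$ with $\cut(Y_k)\geq\delta\abs{Y_k}$ and $\abs{Y_k}\to\infty$, and then Proposition \ref{prop:lwbdCh} yields subgraphs of at least half the size with Cheeger constant bounded below by a fixed multiple of $\delta$. The one genuine difference is your disjointification step: the paper simply takes the resulting subgraphs (which may overlap) and notes $\abs{\Gamma_n}\to\infty$, whereas you first delete the previously used vertices and use the inequality $\cut(\Gamma)\leq\cut(\Gamma\setminus S)+\abs{S}$ to keep the truncated graph's cut size large, so your components are pairwise disjoint; this costs only a constant ($\varepsilon=\delta/4$ rather than $\delta/2$) and hews more literally to the definition of an expander as a disjoint union of components, so it is a harmless and slightly more careful refinement of the paper's argument.
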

\begin{proof}
Let $\varepsilon>0$ be such that $\sep_X(n)\geq 2\varepsilon n$ for all $n$ in some infinite subset $I\subseteq\N$. For each $n\in I$ let $\Gamma'_n$ be a subgraph of $X$ with at most $n$ vertices such that $\cut(\Gamma'_n)\geq 2\varepsilon n$.

By Proposition \ref{prop:lwbdCh} each $\Gamma'_n$ has a subgraph $\Gamma_n$ with $\abs{\Gamma_n}\geq \frac{1}{2} \abs{\Gamma'_n}$ and $h(\Gamma_n)\geq \varepsilon$. Moreover, $\abs{\Gamma_n}\to\infty$ as $n\to\infty$, so $X$ contains a $\varepsilon$-expander.
\end{proof}

Now we concentrate on graphs with uniformly bounded degree, where the separation profile is invariant under regular equivalence. We obtain one immediate consequence of Proposition \ref{prop:lwbdsep}.

\begin{cor}  Let $X$ be a graph of bounded degree which admits a coarse embedding into some $L^p$ space, where $p\in[1,\infty)$. Then $ \sep_X(n)/n\to 0 \ \ \textrm{as} \ \ n\to\infty$.
\end{cor}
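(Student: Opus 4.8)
The plan is to argue by contraposition: I will show that if $\sep_X(n)/n\not\to 0$ then $X$ admits no coarse embedding into $L^p$ for any $p\in[1,\infty)$, which is exactly the contrapositive of the corollary. So suppose $\sep_X(n)/n\not\to 0$. By Proposition \ref{prop:lwbdsep} there is an $\varepsilon>0$ and an $\varepsilon$-expander $\bigsqcup_n\Gamma_n$ sitting inside $X$ as a subgraph with $\abs{\Gamma_n}\to\infty$; since $X$ has degree bounded by some $d\geq 2$, each $\Gamma_n$ has maximal degree at most $d$ as well. Write $N_n=\abs{\Gamma_n}$. The underlying fact I want is the well-known one that expanders do not coarsely embed into $L^p$ for any $p\in[1,\infty)$, but some care is needed because a coarse embedding of $X$ only controls distances in $X$, not the (possibly larger) intrinsic distances in the subgraphs $\Gamma_n$.

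The key analytic input is the $L^p$-Poincar\'e inequality for expanders: there is a constant $C=C(d,p,\varepsilon)$ such that for every finite graph $\Gamma$ on $N$ vertices with maximal degree at most $d$ and Cheeger constant at least $\varepsilon$, and every map $f\colon V(\Gamma)\to L^p$,
\[
\frac{1}{N^2}\sum_{x,y\in V(\Gamma)}\|f(x)-f(y)\|_p^p\;\leq\;\frac{C}{N}\sum_{\set{x,y}\in E(\Gamma)}\|f(x)-f(y)\|_p^p .
\]
For $p=2$ this follows from the spectral gap via Cheeger's inequality; for general $p\in[1,\infty)$ it follows from Matou\v{s}ek's extrapolation argument, with $C$ still depending only on $d,p,\varepsilon$. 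Now assume $F\colon X\to L^p$ is a coarse embedding, with control functions $\rho_-\leq\rho_+$ (we may take $\rho_-$ non-decreasing) satisfying $\rho_-(d_X(x,y))\leq\|F(x)-F(y)\|_p\leq\rho_+(d_X(x,y))$ and $\rho_-(t)\to\infty$. Applying the inequality to $f=F|_{\Gamma_n}$ and using that adjacent vertices of $\Gamma_n$ lie at $X$-distance $1$, the right-hand side is at most $\tfrac{Cd}{2}\,\rho_+(1)^p$, a bound independent of $n$.

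It remains to see that the left-hand side is unbounded in $n$, and this is where the bounded degree of $X$ enters. Balls in $X$ grow at most exponentially, $\abs{B_X(x,r)}\leq d^{r+1}$, so for each $x\in\Gamma_n$ at most $d\sqrt{N_n}$ of the vertices $y\in\Gamma_n$ satisfy $d_X(x,y)\leq\frac{\log N_n}{2\log d}$; hence for $n$ large at least $N_n/2$ of them satisfy the reverse inequality. Summing over $x\in\Gamma_n$, using $\rho_-(d_X(x,y))\leq\|F(x)-F(y)\|_p$ and monotonicity of $\rho_-$,
\[
\frac{1}{N_n^2}\sum_{x,y\in\Gamma_n}\|F(x)-F(y)\|_p^p\;\geq\;\tfrac12\,\rho_-\!\Big(\tfrac{\log N_n}{2\log d}\Big)^p ,
\]
and the right-hand side tends to $\infty$ as $n\to\infty$ since $N_n\to\infty$. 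This contradicts the uniform upper bound from the Poincar\'e inequality, so no such $F$ exists, and the corollary follows.

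The step I expect to need the most care is the $L^p$-Poincar\'e inequality for general $p$: the case $p=2$ is routine, but for arbitrary $p\in[1,\infty)$ one should invoke (or reprove) Matou\v{s}ek's extrapolation and verify that the resulting constant depends only on $d$, $p$ and $\varepsilon$ and not on $N$. Everything else is the combination of this inequality with the at-most-exponential volume growth of bounded degree graphs, and is essentially bookkeeping with the control functions $\rho_\pm$.
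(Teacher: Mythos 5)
Your argument is correct and is essentially the paper's: the paper states this as an immediate consequence of Proposition \ref{prop:lwbdsep} together with the standard $L^p$-Poincar\'e inequality for expanders, which is exactly what you do. Your extra care about ambient $X$-distances versus intrinsic distances in the subgraphs $\Gamma_n$ (handled via the at-most-exponential growth of balls in a bounded degree graph) is precisely the detail the paper leaves implicit, and you handle it correctly.
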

Using groups with relative property (T), Arzhantseva-Tessera produce box spaces with sublinear separation - they do not weakly contain expanders - which do not coarsely embed into any uniformly curved Banach space \cite{AT14}.

\subsection{Separation profiles of expanders}
The goal of this section is to prove Theorem \ref{thm:manyexp}.

Let $X=\bigsqcup_{n\in\N} \Gamma_n$ be a $(d,\varepsilon)$-expander where $g(\Gamma_{n+1})>\abs{\Gamma_n}$ for each $n$ and $\abs{\Gamma_{n+1}}/\abs{\Gamma_n}\to\infty$ as $n\to\infty$. As long as $\bigsqcup_{n\in\N} \Gamma_n$ has unbounded girth, this can always be done by passing to a subsequence.

\begin{thm} For any infinite subsets $M,N\subseteq \N$ with $C=M\setminus N$ infinite, there is no regular map from $X(M)=\bigsqcup_{n\in M} \Gamma_n$ to $X(N)=\bigsqcup_{n\in N} \Gamma_n$.
\end{thm}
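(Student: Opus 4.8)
The plan is to derive a contradiction from the existence of a regular map $\phi: X(M) \to X(N)$ by exploiting the mismatch between girth and size. The key tension is this: a regular map is Lipschitz with constant $L$ and has fibres of size at most $K$, so it cannot compress a large expander into a union of small graphs, nor can it spread a connected expander out across many small components. I would first establish the connectivity-control lemma: since $\phi$ is $L$-Lipschitz and each $\Gamma_n$ is connected, the image $\phi(\Gamma_n)$ lies within a single ball of the disjoint union $X(N)$ — that is, $\phi(\Gamma_n)$ is entirely contained in one component $\Gamma_m$ with $m \in N$. (This uses that $X(N)$ is a \emph{disjoint} union, so points in different components are at infinite distance.) So $\phi$ induces a map $\iota: M \to N$ with $\phi(\Gamma_n) \subseteq \Gamma_{\iota(n)}$.

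Next I would pin down $\iota$ using the girth hypothesis. Take $n \in C = M \setminus N$, so $\iota(n) \neq n$. I claim $\iota(n) < n$ for all but finitely many $n \in C$: if $\iota(n) > n$, then $\Gamma_{\iota(n)}$ has girth $g(\Gamma_{\iota(n)}) > |\Gamma_n|$, so any ball of radius $\le |\Gamma_n|$ in $\Gamma_{\iota(n)}$ is a tree; but $\phi$ restricted to $\Gamma_n$ lands in a ball of radius at most $L \cdot \mathrm{diam}(\Gamma_n) \le L|\Gamma_n|$, and for $n$ large this radius is still less than $g(\Gamma_{\iota(n)})$ — more precisely I would arrange via the Lipschitz and fibre bounds that the image has small enough diameter, or alternatively observe that $\Gamma_n$, being an $\varepsilon$-expander component, cannot admit a regular map into a tree (since trees have bounded separation profile while $\Gamma_n$ has $\cut(\Gamma_n) \ge \tfrac{\varepsilon}{4}|\Gamma_n|$ by Proposition~\ref{prop:upbdCh}). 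Hence for large $n \in C$ we have $\iota(n) \le n$, and since $n \notin N$ while $\iota(n) \in N$, in fact $\iota(n) < n$, giving $|\Gamma_{\iota(n)}| < |\Gamma_n|$ with the ratio $|\Gamma_n|/|\Gamma_{\iota(n)}| \to \infty$ along $C$.

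Finally I would run the counting/separation contradiction. Fix a large $n \in C$ and let $m = \iota(n) < n$, $m \in N$. The restriction $\phi|_{\Gamma_n} : \Gamma_n \to \Gamma_m$ is still $L$-Lipschitz with fibres of size $\le K$, hence a regular map of finite graphs, so $\sep_{\Gamma_n} \preceq \sep_{\Gamma_m}$ with constants depending only on $L, K$ (not on $n$). Evaluate both at $|\Gamma_n|$: the left side is $\cut(\Gamma_n) \ge \tfrac{\varepsilon}{4}|\Gamma_n|$ by Proposition~\ref{prop:upbdCh}, while the right side is at most $\sep_{\Gamma_m}(|\Gamma_m|) = \cut(\Gamma_m) \le |\Gamma_m|$ since $\Gamma_m$ has only $|\Gamma_m| < |\Gamma_n|$ vertices. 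So $\tfrac{\varepsilon}{4}|\Gamma_n| \le C'|\Gamma_m| + C'$ for a uniform constant $C'$, i.e. $|\Gamma_n|/|\Gamma_m|$ is bounded — contradicting $|\Gamma_n|/|\Gamma_{\iota(n)}| \to \infty$ along the infinite set $C$.

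\medskip
\noindent\emph{Main obstacle.} The delicate point is the second step — controlling $\iota$ — and more specifically ruling out that infinitely many components of $X(M)$ get mapped into a \emph{single} component $\Gamma_m$ of $X(N)$, or that the diameter of $\phi(\Gamma_n)$ in $\Gamma_{\iota(n)}$ is not well-controlled. One must check that the bounded-fibre condition forces $\phi(\Gamma_n)$ to have at least $|\Gamma_n|/K$ vertices, so $|\Gamma_{\iota(n)}| \ge |\Gamma_n|/K$, which simultaneously prevents collapse into too-small components and lets the ratio estimate bite. Getting the uniformity of all constants right (independent of $n$) across the girth argument and the separation argument is where the real care is needed; the rest is bookkeeping.
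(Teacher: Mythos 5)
Your overall strategy (factor the regular map through a component map $\iota:M\to N$, then play girth against size component by component) is genuinely different from the paper's, but it has a gap at exactly the point you flag: ruling out $\iota(n)>n$. The setup only guarantees $g(\Gamma_{\iota(n)})>\abs{\Gamma_{\iota(n)-1}}\geq\abs{\Gamma_n}$, and in the delicate case $\iota(n)=n+1$ this is all you have. Your stated radius bound $L\cdot\diam(\Gamma_n)\leq L\abs{\Gamma_n}$ does not put the image inside a tree: for the ball to be a tree you need its radius below half the girth, and $L\abs{\Gamma_n}\geq\abs{\Gamma_n}>g(\Gamma_{n+1})/2$ is entirely possible. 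The fallback ``$\Gamma_n$ admits no regular map into a tree'' presupposes precisely the tree-ness in question; and if you try to realize the comparison via the usual device of connecting images of adjacent vertices by geodesics, the spanned subgraph of $\Gamma_{n+1}$ has up to roughly $(1+dL/2)\abs{\Gamma_n}$ vertices, which again can exceed the girth bound $\abs{\Gamma_n}$, so it need not be a forest. Bounded fibres do not give diameter control either, contrary to the suggestion in your last paragraph. The argument is repairable, but only by importing an extra fact you never prove: components of a $(d,\varepsilon)$-expander have diameter $O_{d,\varepsilon}(\log\abs{\Gamma_n})$, so for large $n$ the image (and its geodesic span) lies in a ball of radius $\ll g(\Gamma_{\iota(n)})/2$, hence in a tree, and then $\cut(\Gamma_n)\geq\frac{\varepsilon}{4}\abs{\Gamma_n}$ from Proposition~\ref{prop:upbdCh} contradicts the bounded cut size of trees via regular-map monotonicity with constants depending only on $L,K,d$. (Your ``main obstacle'' of infinitely many components landing in one $\Gamma_m$ is, by contrast, a non-issue: finiteness of $\Gamma_m$ plus bounded fibres rules it out, and the argument never needs injectivity of $\iota$ anyway.)

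It is worth seeing how the paper sidesteps this entirely: it never pushes a component forward. It applies separation-profile monotonicity once, to the infinite graphs $X(M)\to X(N)$, and then bounds $\sep_{X(N)}(\abs{\Gamma_c})$ for $c\in C$ by looking at an arbitrary subgraph $\Gamma\subseteq X(N)$ with at most $\abs{\Gamma_c}$ vertices. Such a $\Gamma$ suffers no Lipschitz inflation: if at least half of it lies in some $\Gamma_d$ with $d>c$, then $\Gamma\cap\Gamma_d$ has fewer vertices than $g(\Gamma_d)$ and is automatically a forest, while if $d<c$ its cut size is at most $2\abs{\Gamma_{c-1}}$; either way $\sep_{X(N)}(\abs{\Gamma_c})/\abs{\Gamma_c}\to 0$, against $\sep_{X(M)}(\abs{\Gamma_c})\geq\frac{\varepsilon}{4}\abs{\Gamma_c}$. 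So the paper's route needs neither the component map $\iota$ nor any diameter estimate; your route, once patched with the logarithmic diameter of expanders, also works and is a reasonable alternative, but as written the case $\iota(n)=n+1$ breaks it.
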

\begin{proof} Let $c\in C$. By Proposition \ref{prop:lwbdsep}, $\sep_{X(M)}(\abs{\Gamma_c})\geq \frac{\varepsilon}{4}\abs{\Gamma_c}$. Now, let $\Gamma$ be a subgraph of $X(N)$ with $k\leq \abs{\Gamma_c}$ vertices. Either $\cut(\Gamma)=0$ or there is some $d\neq c$ such that $\abs{\Gamma\cap\Gamma_d}\geq \abs{\Gamma}/2$.

If $d<c$ then $\cut(\Gamma)\leq \abs{\Gamma}\leq 2\abs{\Gamma_d} \leq 2\abs{\Gamma_{c-1}}$. However, if $d>c$, then $\Gamma\cap \Gamma_d$ is a forest, so has bounded cut size, hence $\Gamma$ has bounded cut size.
Thus $\sep_{X(N)}(\abs{\Gamma_c})/\abs{\Gamma_c}\to 0$ as $c\to\infty$, so there is no regular map $X(M)\to X(N)$.
\end{proof}

Theorem \ref{thm:manyexp} follows by noticing that there is a family $\mathcal{N}$ of $2^{\aleph_0}$ infinite subsets of $\N$ with $M\setminus N, N\setminus M$ infinite for all distinct $M,N\in\mathcal{N}$.

\subsection{Separation profiles of groups containing expanders}

Osajda's construction of $C'(1/6)$ small cancellation labellings of collections of finite graphs satisfying certain girth and diameter restrictions gives a method for constructing finitely generated groups which isometrically contain a family of expander graphs \cite{Osaj-14-expander}.

We let $\boldsymbol\Gamma=\bigsqcup_{n\in\N} \Gamma_n$ be such a family, which we think of as a $(d,\varepsilon)$ expander whose (oriented) edges are labelled by a finite set $S$. Given any collection of finite graphs $\boldsymbol\Lambda$ with such a labelling, we define a group $G(\boldsymbol\Lambda)$ which is generated by $S$ and satisfies precisely the set of relations obtained by concatenating the labels of edges on simple loops in the graphs.

By \cite{Ol06}, we know that $G(\boldsymbol\Lambda)$ is hyperbolic whenever $\boldsymbol\Lambda$ is finite and $C'(1/6)$. More information on this construction - graphical small cancellation theory - can be found in \cite{Gro03} where it was introduced, and in \cite{Ol06, Osaj-14-expander}.

We require one auxiliary result on the separation of hyperbolic graphs.

\begin{thm}\label{thm:hypsep} Let $X$ be a hyperbolic graph with bounded degree. There exists some $k$ such that $\sep_X(n)\preceq n^{\frac{k-1}{k}}$.
\end{thm}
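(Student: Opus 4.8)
The plan is to exploit the fact that a bounded-degree $\delta$-hyperbolic graph $X$ admits, for each scale $R$, a "net-like" covering by balls of radius $R$ whose intersection pattern behaves like a tree of bounded branching. More precisely, I would first recall (or prove directly from thin triangles) that $X$ has \emph{finite Assouad--Nagata dimension} — indeed hyperbolic spaces with bounded geometry have asymptotic dimension at most some $k$, and the Assouad--Nagata version holds as well (this is due to Buyalo--Lang / Brodskiy--Dydak--Levin--Mitra type arguments). Granting that, Theorem \ref{thm:finAN} would give the weaker bound $\sep_X(n)\preceq n/\log n$, but \emph{that is not good enough}: we need a genuine polynomial power $n^{(k-1)/k}$. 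So the real work is to upgrade the Assouad--Nagata input to a polynomial separation bound in the spirit of the $\Z^k$ computation, using the extra geometric control that hyperbolicity provides on the local growth of the net.

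The key steps, in order, are as follows. First, fix a subgraph $\Gamma\subseteq X$ with $|\Gamma|=m$; we want $\cut(\Gamma)\lesssim m^{(k-1)/k}$. Second, choose a scale $R$ (to be optimised, roughly $R\asymp \log m$ is the naive choice but here we will instead pick $R$ so that balls of radius $R$ in $X$ contain about $m^{1/k}$ vertices — this is where bounded degree and the exponential upper bound on ball growth in $X$ are used to guarantee such an $R$ exists and is not too large). Third, invoke a cobounded "padded" cover of $X$ at scale $R$ of multiplicity at most $k+1$ coming from finite Assouad--Nagata dimension, restrict it to $\Gamma$, and separate $\Gamma$ by deleting the vertices of $\Gamma$ lying in the "boundary layers" between adjacent cover elements. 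Because the multiplicity is bounded and each cover element, intersected with $\Gamma$, has at most $\asymp m^{1/k}$ vertices, the number of deleted vertices is controlled by (number of cover elements meeting $\Gamma$) $\times$ (size of a boundary layer) $\lesssim m^{1-1/k}\cdot$ something, while the leftover pieces are small enough to recurse. Fourth, set up the recursion $f(m)\le (\text{constant})\cdot m^{(k-1)/k} + (\text{constant})\cdot f(m/2)$ or an analogue, and solve it to obtain $\sep_X(n)\preceq n^{(k-1)/k}$, adjusting the exponent $k$ upward by one if the accounting forces it.

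The main obstacle is the third step: converting a bounded-multiplicity cover at a single scale into an actual \emph{cut set} whose size is $m^{1-1/k}$ rather than, say, $m/\log m$. In the $\Z^k$ model case one uses that a ball of radius $R$ has $\asymp R^k$ vertices and a sphere only $\asymp R^{k-1}$, so the boundary layer is a $1/R$ fraction of the ball; in a hyperbolic graph, spheres can be as large as balls (exponential growth), so one cannot literally cut along sphere-like sets of small relative size. The resolution I would pursue is that \emph{within the finite subgraph $\Gamma$} one does not see the full exponential growth of $X$ at the chosen scale: one picks $R$ precisely so that $|B(x,R)\cap X|\approx m^{1/k}$, and the tree-like nesting of hyperbolic horoballs/cones lets one arrange the cover so that each element of $\Gamma$ meets boundedly many others and the "interface" one must delete is itself coverable by boundedly many balls of the next scale down, triggering the recursion. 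Making this interface bound precise — i.e. that the union of pairwise intersections of cover elements, intersected with $\Gamma$, has at most $O(m^{1-1/k})$ vertices — is the crux and is where hyperbolicity (as opposed to mere finite Assouad--Nagata dimension) is genuinely needed.
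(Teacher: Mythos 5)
Your plan stalls exactly where you say it does, and that is a genuine gap rather than a routine detail. The whole argument rests on your third step: that after choosing the scale $R$ so that balls of $X$ have roughly $m^{1/k}$ vertices, the ``interface'' of a bounded-multiplicity Assouad--Nagata cover, intersected with the $m$-vertex subgraph $\Gamma$, has only $O(m^{1-1/k})$ vertices. You give no mechanism for this, and the heuristic you offer (``within $\Gamma$ one does not see the full exponential growth'') does not address the actual difficulty: a subgraph of a hyperbolic graph with $m$ vertices can simply be a ball of the ambient graph, and in a hyperbolic graph a sphere (or any annular boundary layer of width comparable to the padding of the cover) can carry a definite proportion of the ball's vertices, so cutting along such layers can cost $\Theta(m)$ vertices, not $m^{1-1/k}$. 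Nothing in finite Assouad--Nagata dimension alone can rescue this -- as you note, that input only yields $n/\log n$, and indeed there are finite Assouad--Nagata dimension graphs (e.g.\ products of free groups) whose separation really is $n/\log n$ -- so hyperbolicity must enter quantitatively, and your sketch never says how. The recursion in step four is also only set up conditionally on this unproved estimate. In short: the proposal identifies the crux correctly but does not prove it, so it is not a proof.

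For comparison, the paper's argument avoids this local analysis entirely and is essentially two citations: by Bonk--Schramm, every bounded-degree hyperbolic graph admits a quasi-isometric embedding into some $\mathbb{H}^m$, hence a regular map into a bounded-degree graph $Y$ quasi-isometric to $\mathbb{H}^m$; by Benjamini--Schramm--Tim\'ar, $\sep_Y(n)\preceq \log(n)$ for $m=2$ and $\sep_Y(n)\preceq n^{\frac{m-2}{m-1}}$ for $m\geq 3$; monotonicity of the separation profile under regular maps then gives $\sep_X(n)\preceq n^{\frac{k-1}{k}}$ for a suitable $k$. If you want to salvage your approach, the honest route is to import exactly this kind of global input (the embedding into $\mathbb{H}^m$, whose separation is computed via its polynomially growing horospherical/boundary structure) rather than trying to manufacture small interfaces directly inside an arbitrary hyperbolic graph, where the exponent is governed by the embedding dimension and not by the asymptotic dimension of $X$ alone.
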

\begin{proof} If $Y_k$ is a graph of bounded degree which is quasi-isometric to hyperbolic $m$-space $\mathbb{H}^m$, then $\sep_{Y_k}(n)\preceq \log(n)$ if $m=2$ and $\sep_{Y_k}(n)\preceq n^{\frac{m-2}{m-1}}$ if $m\geq 3$ \cite{BenSchTim-12-separation-graphs}.

Every hyperbolic graph $X$ of bounded degree admits a quasi-isometric embedding into some hyperbolic space $\mathbb{H}^m$ \cite{BS00} and hence a quasi-isometric embedding into some $Y_k$. Therefore $\sep_X(n)\preceq n^{\frac{k-1}{k}}$.
\end{proof}

We impose two additional conditions on family of graphs $\boldsymbol \Gamma$ which are both satisfied by taking a suitably sparse subsequence $(\Gamma^k)$.

Set $\Gamma^1=\Gamma_1$. Firstly, we ensure that for every $k$, and every subset $\boldsymbol\Lambda\subseteq \set{\Gamma^1,\dots,\Gamma^k}$ we have $\sep_{G(\boldsymbol\Lambda)}(n) < \frac{n}{k}$ whenever $n\geq \abs{\Gamma^{k+1}}$.

Secondly, for every $k$, $g(\Gamma^{k+1})>2\abs{\Gamma^k}$.

\medskip
The first is possible as for each $k$ we consider the maximal separation of finitely many hyperbolic groups, which all have sublinear separation by Theorem \ref{thm:hypsep}. For the second, we just use the fact that Osajda's construction assumes that the girth of the sequence $\Gamma_n$ is unbounded.

Now we can prove Theorem \ref{thm:manygps}.

\begin{thm} Let $A,B$ be two infinite subsets of $\N$ with $C=A\setminus B$ infinite. Define $\boldsymbol\Lambda(A),\boldsymbol\Lambda(B)= \bigsqcup_{k\in A,B}\Gamma^k$ respectively. Then there is no regular map $G(\boldsymbol\Lambda(A))\to G(\boldsymbol\Lambda(B))$.
\end{thm}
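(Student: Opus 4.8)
The plan is to transcribe to the level of Cayley graphs the argument proving the previous theorem about $X(M)\to X(N)$. I will show that $\sep_{G(\boldsymbol\Lambda(A))}(\abs{\Gamma^c})/\abs{\Gamma^c}\geq\varepsilon/4$ for every $c\in C$, while $\sep_{G(\boldsymbol\Lambda(B))}(\abs{\Gamma^c})/\abs{\Gamma^c}\to 0$ as $c\to\infty$ inside $C$; since $C$ is infinite this gives $\sep_{G(\boldsymbol\Lambda(A))}\not\preceq\sep_{G(\boldsymbol\Lambda(B))}$, and as $G(\boldsymbol\Lambda(A))$ and $G(\boldsymbol\Lambda(B))$ are generated by the finite set $S$ their Cayley graphs have bounded degree, so the monotonicity of $\sep$ under regular maps recalled in the introduction rules out a regular map $G(\boldsymbol\Lambda(A))\to G(\boldsymbol\Lambda(B))$. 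The lower bound is immediate: fixing $c\in C\subseteq A$, Osajda's construction embeds $\Gamma^c$ isometrically as a subgraph of the Cayley graph of $G(\boldsymbol\Lambda(A))$ and $h(\Gamma^c)\geq\varepsilon$, so Proposition \ref{prop:upbdCh} gives $\sep_{G(\boldsymbol\Lambda(A))}(\abs{\Gamma^c})\geq\cut(\Gamma^c)\geq\frac{\varepsilon}{4}\abs{\Gamma^c}$, exactly as in Proposition \ref{prop:upbdsep}.

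For the upper bound, fix $c\in C$ and set $\boldsymbol\Lambda_0=\bigsqcup_{n\in B,\,n<c}\Gamma^n$. Since $c\notin B$ we have $\boldsymbol\Lambda_0\subseteq\set{\Gamma^1,\dots,\Gamma^{c-1}}$, so the first of the two extra hypotheses, applied with $k=c-1$ and $n=\abs{\Gamma^c}=\abs{\Gamma^{(c-1)+1}}$, yields $\sep_{G(\boldsymbol\Lambda_0)}(\abs{\Gamma^c})<\abs{\Gamma^c}/(c-1)$. Everything then reduces to the claim that \emph{every subgraph $\Gamma$ of the Cayley graph of $G(\boldsymbol\Lambda(B))$ with at most $\abs{\Gamma^c}$ vertices is isomorphic to a subgraph of the Cayley graph of $G(\boldsymbol\Lambda_0)$}: granting it, $\cut(\Gamma)\leq\sep_{G(\boldsymbol\Lambda_0)}(\abs{\Gamma^c})<\abs{\Gamma^c}/(c-1)$, so $\sep_{G(\boldsymbol\Lambda(B))}(\abs{\Gamma^c})/\abs{\Gamma^c}<1/(c-1)$, which tends to $0$. (If $B$ has no element below $c$ then $\boldsymbol\Lambda_0=\emptyset$, $G(\boldsymbol\Lambda_0)$ is free, $\cut(\Gamma)\leq 1$, and the bound still holds.)

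To prove the claim I would first reduce to $\Gamma$ connected, since the cut size of a graph is at most that of its largest connected component; a connected $\Gamma$ then has diameter less than $\abs{\Gamma^c}$. Fixing a spanning tree of $\Gamma$ and lifting its vertices one edge at a time through the quotient homomorphism $G(\boldsymbol\Lambda_0)\twoheadrightarrow G(\boldsymbol\Lambda(B))$ produces a candidate copy of $\Gamma$ in the Cayley graph of $G(\boldsymbol\Lambda_0)$; the lifted vertices are automatically distinct since they project onto the distinct vertices of $\Gamma$, so the one thing to check is that every edge of $\Gamma$ lifts to an edge, which amounts to showing that a word of length at most $\abs{\Gamma^c}$ representing the identity in $G(\boldsymbol\Lambda(B))$ already represents the identity in $G(\boldsymbol\Lambda_0)$. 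Now $G(\boldsymbol\Lambda(B))$ is obtained from $G(\boldsymbol\Lambda_0)$ by adjoining precisely the relators read along simple cycles of the graphs $\Gamma^n$ with $n\in B$ and $n>c$; since $\abs{\Gamma^k}\geq g(\Gamma^k)$, the hypothesis $g(\Gamma^{k+1})>2\abs{\Gamma^k}$ forces the girths to increase along the subsequence, so every such new relator has length greater than $2\abs{\Gamma^c}$. By Greendlinger's lemma for graphical $C'(1/6)$ presentations, a non-empty freely reduced word equal to the identity contains more than half of some relator; more than half of a new relator has length greater than $\abs{\Gamma^c}$ and so cannot be a subword of a word of length $\leq\abs{\Gamma^c}$, whence the successive Greendlinger reductions of our word invoke only relators already present in $G(\boldsymbol\Lambda_0)$, and so the word is trivial there too.

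The step I expect to require the most care is this last, combinatorial locality statement: one should confirm that the combined presentation of $G(\boldsymbol\Lambda(B))$ is still $C'(1/6)$ — it is, being a sub-collection of the $C'(1/6)$-labelled family $\boldsymbol\Gamma$ — that graphical Greendlinger applies in the stated form, and that the girth hypothesis is exactly strong enough, which it is, since half of a length greater than $2\abs{\Gamma^c}$ is greater than $\abs{\Gamma^c}$. Everything else is a faithful copy of the expander argument, with the role played there by a far-away component $\Gamma_d$ now played by the smaller group $G(\boldsymbol\Lambda_0)$, and the decay $\abs{\Gamma_{c-1}}/\abs{\Gamma_c}\to 0$ replaced by $1/(c-1)\to 0$.
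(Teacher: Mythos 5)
Your proposal is correct and follows essentially the same route as the paper: the lower bound comes from the isometrically embedded $\Gamma^k$ (via Proposition \ref{prop:upbdCh}), the upper bound from the sparseness condition $\sep_{G(\boldsymbol\Lambda)}(n)<n/k$ together with the girth condition $g(\Gamma^{k+1})>2\abs{\Gamma^k}$, and the conclusion from monotonicity of $\sep$ under regular maps. The only difference is that the paper asserts in one line that a small connected subgraph of $G(\boldsymbol\Lambda(B))$ embeds into some $G(\boldsymbol\Lambda)$ with $\boldsymbol\Lambda\subseteq\set{\Gamma^1,\dots,\Gamma^{k-1}}$, whereas you justify this by the spanning-tree lifting plus graphical Greendlinger/Dehn argument, which is a valid (and welcome) expansion of that step.
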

\begin{proof}
If $k\in C$ then $\Gamma^k$ is an isometrically embedded subgraph of $G(\boldsymbol\Lambda(A))$ so $\sep_{G(\boldsymbol\Lambda(A))}(\abs{\Gamma^k}) \geq \abs{\Gamma^k}\frac{\epsilon}{4}$, by Proposition $\ref{prop:upbdsep}$.

Now let $\Gamma$ be a connected subgraph of $G(\boldsymbol\Lambda(B))$ with at most $\abs{\Gamma^k}$ vertices. Using the assumption that $g(\Gamma^{k+1})>2\abs{\Gamma^k}$, we see that $\Gamma$ isometrically embeds in some $G(\boldsymbol\Lambda)$ with $\boldsymbol\Lambda\subseteq \set{\Gamma^1,\dots,\Gamma^{k-1}}$. Hence, $\sep_{G(\boldsymbol\Lambda(B))}(\abs{\Gamma^k}) < \frac{1}{k}\abs{\Gamma^k}$.

As $C$ is infinite we deduce that $\sep_{G(\boldsymbol\Lambda(A))}(n) \not\preceq \sep_{G(\boldsymbol\Lambda(B))}(n)$.
\end{proof}

\section{Finite asymptotic dimension}

In this section we give upper bounds on the separation profile of graphs with finite asymptotic dimension. This yields a quantitative version of the fact that such graphs do not contain expanders.

\begin{defin}\label{defn:asymdim} Let $X$ be a metric space. We say $X$ has \textit{asymptotic dimension} at most $m$ if there exists a function $h:\R_+\to \R_+$ such that for all $r>0$ we can partition $X$ into $m+1$ subsets $X_0,\dots,X_m$ and each $X_i$ into sets $X_{i,j}$ with $\diam(X_{i,j})\leq h(r)$ and such that $d(X_{i,j},X_{i,j'})> r$ whenever $j\neq j'$.

We say $X$ has \textit{Assouad-Nagata dimension} at most $m$ if the above holds with $h(r)\leq Cr$ for some constant $C>0$.
\end{defin}

We define the growth function of a graph $X$ to be $\gamma_X:\N\to\N\cup\set{\infty}$ where $\gamma(n)$ is the maximal cardinality of a closed ball of radius $n$ in $X$.

We now prove Theorem \ref{thm:finAN} as a consequence of the more general result stated below.

\begin{thm}\label{thm:asymdim} Let $X$ be a graph with asymptotic dimension at most $m-1$, let $h$ be a non-decreasing function provided by the above definition and let $\gamma(n)$ be the growth function of $X$. There exists a constant $k=k(m)$ such that 
\[
	\sep_X(n) \leq \frac{kn}{f_h\left(\frac{n}{2m}\right)},
\] 
where we define $f_h(n)=\max\setcon{k}{\gamma(h(k))\leq n}$.
\end{thm}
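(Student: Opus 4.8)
The plan is to bound the cut size of an arbitrary $n$-vertex subgraph $\Gamma \subseteq X$ by producing, for a well-chosen scale $r$, a relatively small cut set whose removal breaks $\Gamma$ into pieces each of size at most $n/2$. The natural source of such a cut set is the asymptotic-dimension decomposition itself: fix a radius $r$, take the partition $X = X_0 \sqcup \dots \sqcup X_{m-1}$ with each $X_i$ broken into $r$-separated clusters $X_{i,j}$ of diameter at most $h(r)$. First I would intersect this structure with $\Gamma$. Removing from $\Gamma$ the set of all vertices lying within distance $1$ of two distinct clusters of the \emph{same} $X_i$ is not quite what we want; instead, the standard trick is to delete, colour by colour, a thin separating layer. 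Concretely, for each $i$ in turn, the clusters $X_{i,j}$ are $r$-separated, so in the graph metric of $\Gamma$ no edge of $\Gamma$ joins two different clusters of $X_i$; the obstruction is only that $\Gamma$ restricted to $X_i$ need not be connected within a single cluster once we have deleted earlier layers, which is actually in our favour. So the real cut set $S$ will consist of vertices we remove to control component \emph{sizes}, not connectivity across colours.

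The key quantitative step is this: a cluster $X_{i,j}$ has diameter at most $h(r)$ in $X$, hence a ball of radius $h(r)$ contains it, so $|\Gamma \cap X_{i,j}| \le \gamma(h(r))$. Choosing $r$ so that $\gamma(h(r)) \le n/(2m)$ — that is, taking $r = f_h(n/(2m))$ in the notation of the statement — guarantees that after we have partitioned the vertices of $\Gamma$ by colour, each colour class $\Gamma \cap X_i$ is a disjoint union of pieces of size at most $n/(2m)$, no two of which are joined by an edge of $\Gamma$. Summing over the $m$ colours, the total number of vertices is $n$, and I now need to assemble the $m$ colour classes into at most two groups each of size $\le n/2$ while deleting few vertices. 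Here one argues greedily: order the colour classes and the clusters within them, accumulate clusters until the running total first exceeds $n/2$; since each cluster contributes at most $n/(2m) \le n/2$, the accumulated block has size in $(n/2 - n/(2m),\, n/2]$, and the complement has size $< n/2 + n/(2m)$. To push the complement below $n/2$ we delete a single further cluster straddling the boundary, of size at most $\gamma(h(r)) \le n/(2m) \le \gamma(h(r))$. Wait — that deletion alone is too crude; instead one deletes a whole colour's worth of ``interface'' vertices. The cleaner route, and the one I would actually carry out, is to delete for each colour $i$ a separating set of size at most $\gamma(h(r))$ between the ``low'' and ``high'' clusters of that colour, giving $|S| \le m\,\gamma(h(r)) \le m \cdot n/(2m) = n/2$; more careful bookkeeping (deleting only $O(1)$ clusters per colour, or using that we may re-run the argument at the optimal scale) trims this to $|S| \le k\, n / f_h(n/(2m))$ for an absolute constant $k = k(m)$, since the number of clusters one must delete per colour is bounded independently of $n$ and each has at most $\gamma(h(r)) = \gamma\big(h(f_h(n/(2m)))\big) \le n/(2m)$ vertices — and crucially $n / \gamma(h(r)) \asymp n / (n/(2m)) $ is replaced by the genuine profile bound by comparing with the maximal admissible $r$, namely $r \approx f_h(n/(2m))$, so that the deleted set has size $\approx (\text{number of deleted clusters}) \times \gamma(h(r))$ and the cut ratio is governed by how $r$ grows with $n$.

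Let me restate the skeleton I would write up. (i) Given an $n$-vertex subgraph $\Gamma$, set $r = f_h(n/(2m))$ and invoke the asymptotic-dimension partition at scale $r$. (ii) Observe $|\Gamma \cap X_{i,j}| \le \gamma(h(r)) \le n/(2m)$ for every cluster. (iii) For each colour $i = 0, \dots, m-1$, linearly order the clusters meeting $\Gamma$ and find the threshold cluster at which the cumulative count crosses $|\Gamma \cap X_i|/2$; delete that one threshold cluster, putting $S$ equal to the union of the $m$ deleted clusters, so $|S| \le m\,\gamma(h(r))$. (iv) Argue that $\Gamma \setminus S$ now decomposes: within each colour the surviving clusters split into a ``below'' group and an ``above'' group with no edges between clusters (by $r$-separation, $r \ge 1$) and no edges to other colours' clusters are relevant because we may choose, by a pigeonhole/averaging over the $2^m$ sign patterns, a global assignment of each colour to ``side $0$'' or ``side $1$'' so that both sides have $\le n/2$ vertices. (v) Conclude $\cut(\Gamma) \le |S| \le m\,\gamma(h(r)) $, and then improve the constant to the stated $kn/f_h(n/(2m))$ by running the deletion at the \emph{largest} scale $r$ still satisfying $\gamma(h(r)) \le n/(2m)$ and noting the deleted mass is a bounded number of clusters.

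The main obstacle I anticipate is step (iv): coordinating the $m$ independent colour-wise bipartitions into a single bipartition of $\Gamma \setminus S$ with both sides $\le n/2$. A naive union can have one side as large as $n$. The fix is an averaging argument over the $2^m$ choices of which side each colour contributes its ``below'' part to: the expected size of side $0$ is exactly $n/2$ (ignoring $S$), so some choice gives both sides $\le n/2$; this is where the factor $k(m)$ — here roughly $2^m$ or $m \cdot 2^m$ — enters, and it explains why $k$ is allowed to depend on $m$ but not on $n$ or on $h$. A secondary subtlety is that $f_h$ as defined could a priori be infinite or ill-behaved if $\gamma$ grows slowly; but since the statement only claims the inequality $\sep_X(n) \le kn / f_h(n/(2m))$, the bound is vacuous precisely when $f_h$ is small, so no case analysis is needed. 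Finally, deriving Theorem \ref{thm:finAN} is then a routine substitution: for finite Assouad--Nagata dimension $h(r) \le Cr$ forces $\gamma(h(k)) \le \gamma(Ck)$, and for graphs of polynomial growth $\gamma(t) \le C t^d$ one computes $f_h(n/(2m)) \asymp n^{1/d}$, yielding $\sep_X(n) \preceq n^{(d-1)/d}$; in the general bounded-valency Assouad--Nagata case $\gamma$ is at most exponential, giving $f_h(n/(2m)) \succeq \log n$ and hence $\sep_X(n) \preceq n/\log n$.
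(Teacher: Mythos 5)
There is a genuine gap, and it sits exactly where you wave your hands ("more careful bookkeeping \dots trims this to $|S|\le kn/f_h(n/(2m))$"). Your cut set $S$ is a union of $O(m)$ whole clusters, each of size up to $\gamma(h(r))$ with $r=f_h(n/(2m))$; but by the maximality in the definition of $f_h$, $\gamma(h(r))$ is typically comparable to $n/(2m)$, so your bound is $|S|\lesssim n/2$ -- which is trivially true for any $n$-vertex graph and nowhere near the claimed $kn/r$. Concretely, for polynomial growth $\gamma(t)\asymp t^d$ and $h(t)\le Ct$ one has $r\asymp n^{1/d}$ while a single cluster can meet $\Gamma$ in $\asymp n$ vertices, whereas the theorem demands a cut of size $\asymp n/r\asymp n^{(d-1)/d}$. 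Deleting clusters can never produce the $1/f_h$ factor; there is no mechanism in your argument that does. The paper's proof gets this factor from a coarea/pigeonhole step that your outline is missing: pick a colour $B_0$ meeting $\Gamma$ in at least $n/m$ vertices, let $U$ be a union of its clusters containing between $n/(4m)$ and $n/(2m)$ vertices of $\Gamma$, and consider the $r$ pairwise disjoint level sets $C_l=\setcon{v\in V\Gamma}{d_X(v,U)=l}$, $1\le l\le r$. Each $C_l$ separates $U$ from the remaining $B_0$-clusters (these lie at distance $>r$ from $U$ by $r$-separation), so by pigeonhole some $C_l$ has at most $n/r$ vertices; removing it, the largest component of $\Gamma\setminus C_l$ misses $U$ or the far part and hence has at most $(1-\tfrac{1}{4m})n$ vertices, and iterating $k(m)$ times gives $\cut(\Gamma)\le kn/r$. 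It is the thin annulus, not a cluster, that serves as the cut.

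A secondary but real problem is your step (iv). Within one colour, distinct clusters are already $r$-separated with $r\ge 1$, so no edge of $\Gamma$ joins them and deleting a "threshold cluster" accomplishes nothing for connectivity; the actual connectivity of $\Gamma\setminus S$ runs through \emph{cross-colour} edges, which your deletion does not touch. A path can pass from a "below" cluster of colour $0$ through an adjacent vertex of colour $1$ into an "above" cluster of colour $0$, so the colourwise bipartitions do not induce a decomposition of $\Gamma\setminus S$, and the averaging over $2^m$ sign patterns only balances sizes -- it cannot disconnect anything. Your preliminary observations (choice of $r=f_h(n/(2m))$, the bound $\abs{\Gamma\cap X_{i,j}}\le\gamma(h(r))\le n/(2m)$, and the derivation of Theorem \ref{thm:finAN} from the statement) do match the paper, but the core of the proof needs to be replaced along the lines above.
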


\begin{proof} Let $\Gamma$ be a subgraph of $X$ containing $n$ vertices, then setting $r=f_h\left(\frac{n}{2m}\right)$, we obtain a cover of $X$ by $m$ subsets $B_0,\dots,B_{m-1}$ such that each $B_i$ decomposes into subsets $B_{i,j}$ of diameter at most $h(r)$ which are $r$ disjoint.

It follows immediately that for some $i$, $\Gamma\cap B_i$ contains at least $\frac{n}{m}$ vertices, without loss we assume this is true for $i=0$.
Now, each $B_{i,j}$ contains at most $\frac{n}{2m}$ vertices in $\Gamma$, so $V(\Gamma)$ meets at least two such $B_{0,j}$.

Let $U$ be a union of sets $U_j=B_{0,j}\cap \Gamma$ with between $\frac{n}{4m}$ and $\frac{n}{2m}$ vertices - a greedy search will achieve this. Notice that the complement, $V$, of the $r$ neighbourhood of $U$ contains at least $n/2m$ vertices of $B_0$.

Consider the sets $C_l = \setcon{v\in V\Gamma}{d_X(v,U)=l}$ with $1\leq l\leq r$.

It is clear that for each $l$, $U$ and $V$ are disjoint from $C_l$ and cannot be connected by a path in $\Gamma\setminus C_l$. There exists an $l$ such that $\abs{C_l}\leq \frac{n}{r}$ by the pigeon-hole principle. Fix such an $l$.

Any largest connected component $\Gamma_1$ of $\Gamma\setminus C_l$ is disjoint from either $U$, $V$ or both, so it contains at most $(1-\frac{1}{4m})n$ vertices. If $\abs{\Gamma_1}\leq \frac{n}{2}$ then we are done.

If not, then applying the above procedure to $\Gamma_1$ we can remove another set $C'_{l'}$ of at most $\frac{n}{r}$ vertices; a largest connected component $\Gamma_2$ of $\Gamma_1\setminus C'_{l'}$ contains at most $n(1-\frac{1}{4m})^2$ vertices.

Repeating this process $k$ times, where $k$ satisfies $(1-\frac{1}{4m})^k\leq 1/2$, we deduce that after removing a set $C$ containing at most $kn/r$ vertices of $\Gamma$, the maximal connected component of $\Gamma\setminus C$ is contained in $\Gamma_k$ which has at most $n(1-\frac{1}{4m})^k \leq \frac{n}{2}$ vertices.
\end{proof}

To complete the proof of Theorem \ref{thm:finAN} we notice that $f_h(n)\succeq\log(n)$ for any bounded valence graph with finite Assouad-Nagata dimension. More specifically we recover the upper bound $\sep_X(n)\preceq n^{\frac{d-1}{d}}$ for any group with polynomial growth of degree at most $d$. Every vertex transitive graph with polynomial growth is quasi-isometric to the Cayley graph of a finitely generated group \cite{Sab,Tro}, so the above result naturally extends to all vertex transitive graphs of polynomial growth.

\begin{prob} Is it true that every vertex transitive graph $X$ with polynomial growth of degree $d$ satisfies $\sep_X(n)\succeq n^{\frac{d-1}{d}}$?
\end{prob}

\end{document}